\newtheorem{thm}{Theorem}[section]
\newtheorem{cor}{Corollary}[section]
\newtheorem{lem}{Lemma}[section]
\newtheorem{rem}{Remark}[section]
\theoremstyle{Problem}
\theoremstyle{definition}
\numberwithin{equation}{section}
\newcommand{\pp}{\mathbb{P}}
\newcommand{\ee}{\mathbb{E}}
\newcommand{\FF}{\mathcal{F}}
\newcommand{\rr}{\mathbb{R}}
\def\beq{\begin{equation}}
\def\deq{\end{equation}}
\def\dsp{\displaystyle}
\begin{document}
\title[Cram\'{e}r's moderate deviations of modularity in network]
{Cram\'{e}r's moderate deviations of modularity in network}
\thanks{This work is supported by National Natural Science Foundation of China (NSFC-11971154).}

\author[Y. Miao]{Yu Miao}
\address[Y. Miao]{College of Mathematics and Information Science, Henan Normal University, Henan Province, 453007, China; Henan Engineering Laboratory for Big Data Statistical Analysis and Optimal Control, Henan Normal University, Henan Province, 453007, China.} \email{\href{mailto: Y. Miao
<yumiao728@gmail.com>}{yumiao728@gmail.com}; \href{mailto: Y. Miao <yumiao728@126.com>}{yumiao728@126.com}}

\author[Q. Yin]{Qing Yin}
\address[Q. Yin]{College of Mathematics and Information Science, Henan Normal University, Henan Province, 453007, China.}
\email{\href{mailto: Q. Yin
<qingyin1282@163.com>}{qingyin1282@163.com}}

\begin{abstract} Complex networks play a crucial role in understanding physical, biological, social and technological systems. One of the most relevant features of graphs representing real systems is community structure. In this paper, for a specific partition of a given network, we prove the Cram\'{e}r's moderate deviations of modularity for the partition when the size of the network gets large.
\end{abstract}

\keywords{Modularity; Cram\'{e}r's moderate deviations; Network.}

\subjclass[2020]{05C82, 60F05}
\maketitle

\section{Introduction}

The modern science of networks has brought significant advances to our understanding of complex systems. Existing networks often display a high level of local inhomogeneity, with high edge density within certain groups of nodes and low edge density between these groups.  A relevant feature of networks is community structure. Detecting communities is of great importance in understanding, analyzing, and organizing networks, as well as in making informed decisions (see \cite{A-B,Jackson,Newman,N-10}).

Many approaches have been proposed for detecting community structure in networks. For example, Fortunato \cite{Fortunato} provided some striking examples of real networks with community structure. In order to distinguish meaningful structural changes from random fluctuations, Rosvall and Bergstrom \cite{R-B} presented a solution to this problem by using bootstrap resampling accompanied by significance clustering. Lancichinetti et al. \cite{L-F-R} described a measure aimed at quantifying the statistical significance of single communities. Zhang and Chen \cite{Z-C} introduced a statistical framework for modularity based network community detection. Under this framework, a hypothesis testing procedure is developed to determine the significance of an identified community structure. Ma and Barnett \cite{M-B} proved that the largest eigenvalue and modularity are asymptotically uncorrelated, which suggests the need for inference directly on modularity itself when the network is large.

Li and Qi \cite{L-Q} proposed a way of evaluating the significance of any given partition by considering whether this particular partition can arise simply from randomness under the assumption that there is no underlying community structure in the network. They further described the global null hypothesis called free labeling. Under this null hypothesis, they derived the asymptotic distribution of modularity. Moreover, they performed a simulation study to validate the asymptotic behavior and further used some well-known real network data for illustration. The significance of the partition is defined based on this asymptotic distribution, which can help assess its goodness. Two different partitions can also be compared statistically. Simulation studies and real data analyses are performed for illustration. The model for a specific partition of a given network is as follows.

We consider an undirected graph $G$ consisting of $n$ vertices $\{v_1, v_2, \cdots, v_n\}$ and $m$ edges $\{e_1, e_2, \cdots, e_m\}$. Let $A_{ij}$ be the number of edges between vertex $v_i$ and vertex $v_j$, for $1\le i, j\le n$, and $k_i(n)$ denote the degree of vertex $v_i$, which is the number of edges connected to vertex $v_i$ (for convenience, we write $k_i$ instead of $k_i(n)$).
In this paper, we discuss a simple graph, for which $A_{ij}$ is $0$ or $1$ if $i\neq j$ and $A_{ii}=0$. Then it is easy to see that
$$
k_i=\sum_{j=1}^{n}A_{ij}=\sum_{j=1}^{n}A_{ji}, \ \ \ \text{for} \ \ \ 1\leq i\leq n
$$
and
$$
\sum_{i=1}^{n}k_i=2m, \ \ \ \ \text{for} \ \ \ 1\le i\le n.
$$
Let $C$ denote a partition of network $G$ (using the existing community detection method, see Fortunato \cite{Fortunato}), i.e., each vertex $v_i\ (1\leq i\leq n)$ is associated with a group label or color $c_i\in \{1, 2, \cdots, K\}$, where $K$ is the total number of communities by the partition, and we denote $C=(c_1, c_2, \cdots, c_n)$.
Newman \cite{N-06} introduced the following modularity of the partition $C$,
\beq\label{1-1}
Q_n(C)=\frac{1}{2m}\sum_{i,j}\left(A_{ij}-\frac{k_ik_j}{2m}\right)\delta_{c_{i},c_{j}}
=\frac{1}{2m}\sum_{i,j}B_{ij}\delta_{c_{i},c_{j}},
\deq
where
$$
\delta_{c_{i},c_{j}}=
 \begin{cases}  1 \ \ \ \ \ \ &\ \text{if}\ \ c_i=c_j\\
  0 \ \ \ \ \ \ &\ \text{otherwise}
  \end{cases}\\
$$
and
\beq\label{h1}
B_{ij}=A_{ij}-\frac{k_ik_j}{2m},\ \ \ \ 1\le i,j\le n.
\deq
It is not difficult to check that $-1<Q_n(C)<1$, and $Q_n(C)$ is the weighted sum of $B_{ij}$ over all pairs of vertices $i$, $j$ that fall in the same groups. It measures the extent to which vertices of the same type are connected to each other in a network.

For a given partition $C$ of the network, we are interested in whether this partition could be obtained by randomly assigning colors to the vertices. The global null hypothesis $H_0$ is that the colors are assigned to vertices randomly, regardless of the structure of the network. The probability that a given vertex is labeled as group $1$ is $p_1=|Col(1)|/n$, where $Col(1)$ is the cardinality of the set of vertices with color $1$; the probability is $p_2=|Col(2)|/n$ for group $2$, and so on. For any $1\leq k\leq K$, it is easy to check that
$$
p_1+p_2+\cdots+p_K=1, \ \ \ \ p_k\ge0.
$$
The labeling of different vertices is assumed to be independent so $H_0$ is also called {\em free labeling}.

Assume that the partition $C=(c_1,c_2,\ldots,c_n)$ is a random vector, where $c_1,c_2,\ldots,c_n$ are independent identically distributed random variables, and have the following distribution
$$
\pp(c_1=j)=p_j, \ \ \ \ 1\leq j\leq K.
$$
Denote
\beq\label{e-e}
p_{(l)}=\sum_{k=1}^{K}p_k^l, \ \ \ \text{for} \ \ \ l=1, 2, \cdots
\deq
and
\beq\label{b-1}
\bar{h}(c_i,c_j)=\delta_{c_i,c_j}-p_{c_{i}}-p_{c_{j}}+p_{(2)}, \ \ \ 1\le i\neq j\le n.
\deq
In this case, we denote $Q_n(C)$ by $Q_n$ to avoid confusion. Li and Qi \cite{L-Q} proved the following asymptotic normality of $Q_n$ under some conditions:
 \begin{align}\label{clt}
 \frac{Q_n-\mu_n}{\sigma_n}\xrightarrow{d}N(0,1),
 \end{align}
 where $\mu_n$ and $\sigma_n^2$ are given by
\beq\label{2-3}
\mu_n=\ee[Q_n]=-\frac{1-p_{(2)}}{4m^2}\sum_{i=1}^nk_i^2,
\deq
\beq\label{2-3-1}
\sigma_n^2=Var(Q_n)=\frac{p_{(2)}+p_{(2)}^2-2p_{(3)}}{2m^2}\sum_{1\leq i\neq j\leq n}B_{ij}^2+\frac{p_{(3)}-p_{(2)}^2}{m^2}\sum_{i=1}^nB_{ii}^2.
\deq
Yin et al. \cite{M-Y-W-Y} proved the moderate deviation principle of the modularity estimator for the specific partition of a given network. Miao and Yin \cite{M-Y} studied the Berry-Esseen bound and strong law of large numbers of modularity in network when the size of the network gets large.

Let us recall the development of the Cram\'{e}r moderate deviations as follows.
Let $(\eta_i)_{i\ge 1}$ be a sequence of independent and identically distributed centered real random variables satisfying Cram\'{e}r's condition:
\beq\label{22}
\ee\exp\left\{c_0|\eta_1|\right\}<\infty,
\deq
where $c_0$ is a positive constant. Denote $\ee\eta_1^2=\sigma^2$ and $S_n=\sum_{i=1}^n\eta_i$. Cram\'{e}r \cite{Cramer} established an asymptotic expansion of the probabilities of moderate deviations for the partial sums, that is, for all $0<x=o(n^{1/2})$,
\beq\label{22-1}
\Bigg|\ln\frac{\pp\left(S_n/(\sigma\sqrt{n})>x\right)}{1-\Phi(x)}\Bigg|=O\left(\frac{1+x^3}{\sqrt{n}}\right),\ \ \ \text{as}\ \ \ n\to\infty,
\deq
where $\Phi(x)$ is the standard normal distribution function. Cram\'{e}r's moderate deviations for sums of independent random variables have been studied by many authors, (see, for instance, Feller \cite{Feller}, Petrov \cite{Petrov-54} and \cite{Petrov-75}, Sakhanenko \cite{Sakhanenko}, Saulis and Statulevi\v{c}ius \cite{S-S} and
Statulevi\v{c}ius \cite{Statulevicius}). Grama and Haeusler \cite{G-H} developed a new approach for
proving large deviation results for martingales based on a
change of probability measure. It extends to the case of martingales the conjugate distribution
technique due to Cram\'er.
 Fan et al. \cite{F-G-L-2013} gave an expansion of large deviation probabilities for martingales, which extends the classical result due to Cram\'er to the case of martingale differences satisfying the conditional Bernstein condition.
Fan et al. \cite{F-G-L-2020} proved a Cram\'er moderate deviation expansion for martingales with differences having finite conditional moments of order $2+\rho$, $\rho\in(0, 1]$, and finite one-sided
conditional exponential moments.
Fan \cite{Fan} derived Cram\'er-type moderate deviations for stationary sequences of bounded random variables. Fan and Shao \cite{F-S-2022} extended the classical Cram\'er result to the cases of normalized martingales and standardized martingales, with martingale differences satisfying the conditional Bernstein condition.

Based on the above discussions, the main purpose of this paper is to establish Cram\'{e}r's moderate deviations of modularity in network by using the martingale approximation and Cram\'{e}r's moderate deviations for martingales from Fan \cite{Fan} and Fan and Shao \cite{F-S-2022}.
 The paper is organized as follows, our main results are stated and discussed in Section 2. In Section 3, the preliminary lemmas are stated. Proofs of main results are obtained in Section 4. Throughout the paper, the symbol $M$ denotes a positive constant which is not necessarily the same one in each appearance.

\section{Main results}
In order to obtain the main results, we need to introduce the following symbols:
\beq\label{q1}
\delta_n=\left(\frac{p_{(2)}+p_{(2)}^2-2p_{(3)}}{m}\right)^{1/2},
\deq
\beq\label{q21}
\varepsilon_n=\frac{2\max_{1\le i\le n}k_i}{\sqrt{m(p_{(2)}+p_{(2)}^2-2p_{(3)})}},
\deq
\beq\label{q22}
\eta_n=\frac{}{}\frac{\sqrt{64e\max_{1\le i\le n}k_i}}{\sqrt{m^{1/2}(p_{(2)}+p_{(2)}^2-2p_{(3)})}}=\frac{\sqrt{32e}}{\sqrt{p_{(2)}+p_{(2)}^2-2p_{(3)}}}\sqrt{\varepsilon_n}
\deq
 and
\beq\label{q3}
\gamma_n=\frac{4e\max_{1\le i\le n}k_i}{\sqrt{m(p_{(2)}+p_{(2)}^2-2p_{(3)})}}=4e\varepsilon_n.
\deq

\begin{thm}\label{thm2-1}
Assume that the degree sequence $\{k_i, 1\le i\le n\}$ satisfy the following conditions: $\varepsilon_n=o(1)$ as $n\to\infty$,
\beq\label{tt}
\eta_n\le \frac{1}{2} \ \ \text{and} \  \ \varepsilon_n\le \frac{1}{8e} \ \ \ \text{for all}\ n.
\deq
Then it holds that for all $0\le x=o(\eta_n^{-1})$,
\begin{align*}
&\ \ \ \ \Bigg|\ln\frac{\pp\left(\frac{Q_n-\mu_n}{\delta_n}>x\right)}{1-\Phi(x)}\Bigg|\\
&\le M\Big(x^3(\varepsilon_n+\eta_{n})+x^2\gamma_n|\ln\gamma_n|
+(1+x)(\varepsilon_n|\ln\varepsilon_n|+\eta_n|\ln\eta_n|+\gamma_n|\ln\gamma_n|)\Big)\\
&\le M\Big(x^3 \eta_{n}+x^2\eta_n^2|\ln\eta_n|+(1+x)\eta_n|\ln\eta_n|\Big).
\end{align*}
Moreover, the same result holds true when replacing $Q_n-\mu_n$ is replaced by $\mu_n-Q_n$.
\end{thm}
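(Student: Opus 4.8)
The plan is to realize $Q_n-\mu_n$ as the terminal value of a martingale and then invoke the Cram\'er moderate deviation expansions for martingales of Fan and of Fan--Shao. The first step is a Hoeffding-type decomposition of the kernel $\delta_{c_i,c_j}$. Writing $\delta_{c_i,c_j}-p_{(2)}=(p_{c_i}-p_{(2)})+(p_{c_j}-p_{(2)})+\bar h(c_i,c_j)$ and using $\sum_j B_{ij}=0$, $\sum_{i,j}B_{ij}=0$ together with the symmetry $B_{ij}=B_{ji}$, the centred modularity splits as
\[
Q_n-\mu_n=\frac{1}{2m^2}\sum_{i=1}^n k_i^2\bigl(p_{c_i}-p_{(2)}\bigr)+\frac{1}{m}\sum_{1\le i<j\le n}B_{ij}\,\bar h(c_i,c_j)=:L_n+R_n,
\]
where $L_n$ is a sum of i.i.d. centred summands and $R_n$ is a degenerate $U$-statistic. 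With respect to the filtration $\FF_j=\sigma(c_1,\dots,c_j)$ I set
\[
\tilde D_j=\frac{k_j^2}{2m^2}\bigl(p_{c_j}-p_{(2)}\bigr)+\frac{1}{m}\sum_{i<j}B_{ij}\,\bar h(c_i,c_j),\qquad j=1,\dots,n,
\]
so that $Q_n-\mu_n=\sum_{j=1}^n\tilde D_j$. The degeneracy identity $\ee[\bar h(c_i,c_j)\mid c_i]=0$ yields $\ee[\tilde D_j\mid\FF_{j-1}]=0$, hence $(\tilde D_j,\FF_j)$ is a martingale difference sequence and $X_j:=\tilde D_j/\delta_n$ is the normalized array to which the martingale machinery applies.

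Next I would verify the two hypotheses of the expansion. For the difference bound, since $|\bar h|\le 2$ and, for fixed $j$, $\sum_{i}|B_{ij}|\le\sum_i A_{ij}+\frac{k_j}{2m}\sum_i k_i=2k_j\le 2\max_i k_i$, one gets $|\tilde D_j|\lesssim \max_i k_i/m$, so that $|X_j|\lesssim\varepsilon_n$; combined with $\varepsilon_n\le 1/(8e)$ this gives the conditional Bernstein condition with parameter of order $\varepsilon_n$. For the conditional variance $V_n:=\sum_j\ee[X_j^2\mid\FF_{j-1}]$, a direct computation gives $\ee[V_n]=\sigma_n^2/\delta_n^2$, and the deterministic identity $\sum_{i\ne j}B_{ij}^2=2m+O\bigl((\max_i k_i)^2\bigr)$ shows $\sigma_n^2/\delta_n^2=1+O(\varepsilon_n^2)$. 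The random fluctuation of $V_n$ is the crux: after taking expectation over $c_j$ alone, $V_n$ becomes $\tfrac{1}{m^2\delta_n^2}\sum_{i\ne i'}\bigl(\sum_{j}B_{ij}B_{i'j}\bigr)\psi(c_i,c_{i'})$ plus a diagonal term, where $\psi(c_i,c_{i'})=\ee_{c_j}[\bar h(c_i,c_j)\bar h(c_{i'},c_j)]$ is again degenerate. Exploiting this double degeneracy and the spectral estimate $\|B\|_{\mathrm{op}}\le\|A\|_{\mathrm{op}}+\tfrac{1}{2m}\sum_i k_i^2\lesssim\max_i k_i$ (so that $\sum_{i\ne i'}(B^2)_{ii'}^2\le\mathrm{tr}(B^4)\lesssim(\max_i k_i)^2\,m$) I would bound $Var(V_n)\lesssim\varepsilon_n^2$, and then Chebyshev's inequality at scale $\eta_n\asymp\sqrt{\varepsilon_n}$ gives $\pp(|V_n-1|>\eta_n)\lesssim\varepsilon_n^2/\eta_n^2\asymp\varepsilon_n$. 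Thus the conditional variance lies within $\eta_n$ of $1$ off an event of negligible probability.

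With these inputs recorded I would apply the Cram\'er moderate deviation expansion for martingales from Fan \cite{Fan} and Fan and Shao \cite{F-S-2022} to $\sum_j X_j$: the difference bound of order $\varepsilon_n$ produces the terms $x^3\varepsilon_n$ and $(1+x)\varepsilon_n|\ln\varepsilon_n|$; the conditional-variance control at level $\eta_n$ off an exceptional event produces $x^3\eta_n$ and $(1+x)\eta_n|\ln\eta_n|$; and the residual variance-gap contributes $x^2\gamma_n|\ln\gamma_n|$ and $(1+x)\gamma_n|\ln\gamma_n|$, all valid on $0\le x=o(\eta_n^{-1})$. This is precisely the first displayed bound. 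The second, cleaner bound then follows from the elementary relations $\gamma_n=4e\varepsilon_n\asymp\varepsilon_n\asymp\eta_n^2$ and $\varepsilon_n=o(\eta_n)$, which let $\eta_n$ absorb the other two parameters. Finally, the lower-tail statement for $\mu_n-Q_n$ is obtained by running the identical argument for the martingale $-\sum_j X_j$, whose differences and conditional variance satisfy the same estimates.

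The main obstacle is exactly this quantitative control of the conditional variance $V_n$. Unlike a sum of independent variables, $V_n$ is a genuinely random quadratic object in the colours, and proving $Var(V_n)\lesssim\varepsilon_n^2$ with usable constants rests on the degeneracy of $\psi$ together with the sharp spectral bound for the modularity matrix $B$; this is what fixes $\eta_n\asymp\sqrt{\varepsilon_n}$ as the dominant error rate and limits the deviation range to $x=o(\eta_n^{-1})$.
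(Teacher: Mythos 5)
Your overall skeleton (martingale differences in the filtration $\FF_j=\sigma(c_1,\dots,c_j)$ plus the Fan--Shao expansion) is the same as the paper's, but there is a genuine gap at exactly the point you call the crux: the control of the conditional variance. The martingale result you need (Lemma 3.1 in the paper, i.e.\ Fan--Shao, Remark 2.1) requires condition (ii), an \emph{exponential} concentration bound $\pp\left(|\langle X\rangle_n-1|\ge x\right)\le M\exp\{-x\eta_n^{-2}\}$ valid for \emph{all} $x>0$. Your plan is to bound $Var(V_n)\lesssim\varepsilon_n^2$ and apply Chebyshev at scale $\eta_n\asymp\sqrt{\varepsilon_n}$, concluding that $V_n$ is within $\eta_n$ of $1$ ``off an event of negligible probability'' $\lesssim\varepsilon_n$. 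That is far too weak for a Cram\'er-type statement: an exceptional event of probability $\asymp\varepsilon_n\asymp\eta_n^2$ must be compared with the Gaussian tail $1-\Phi(x)\asymp e^{-x^2/2}/(1+x)$, and it stops being negligible as soon as $x\gtrsim\sqrt{2|\ln\varepsilon_n|}$. Hence a Chebyshev bound can only deliver the expansion in the range $x=O\big(\sqrt{|\ln\eta_n|}\,\big)$, nowhere near the claimed range $x=o(\eta_n^{-1})$, and it never verifies condition (ii), so Lemma 3.1 cannot be invoked at all. The paper gets the required exponential bound by splitting $\langle T\rangle_n-1$ into a sum of centered \emph{independent} bounded variables $U_{ni}$ plus a \emph{degenerate bounded U-statistic} $\overline H_{i,l}(c_i,c_l)$ (the same double degeneracy you observed), and then applying the Gin\'e--Lata{\l}a--Zinn moment inequalities (Lemmas 3.2--3.5) with the optimized choice $p\asymp x$, which yields $\pp(|\langle T\rangle_n-1|\ge x)\le M\exp\{-x\eta_n^{-2}\}$ for all $x>0$, with $\eta_n^2=64e\max_ik_i/(\sqrt m\,(p_{(2)}+p_{(2)}^2-2p_{(3)}))$. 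Replacing your Chebyshev step by this moment-inequality argument is not cosmetic; it is the heart of the proof and fixes both $\eta_n$ and the admissible range of $x$.

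A secondary structural difference also matters. You put \emph{all} of $(Q_n-\mu_n)/\delta_n$ into the martingale (differences built from $B_{ij}\bar h$ plus the $k_j^2(p_{c_j}-p_{(2)})$ term), whereas the paper's martingale $T_n$ keeps only the $A_{ij}\bar h(c_i,c_j)$ part; the remainders $-\frac{1}{2m\delta_n}\sum_{i<j}\frac{k_ik_j}{m}\bar h(c_i,c_j)$ and $\frac{1}{2m\delta_n}\sum_i\frac{k_i^2}{m}(p_{c_i}-p_{(2)})$ are split off and shown to be exponentially negligible at threshold $\frac{x}{2}\gamma_n|\ln\gamma_n|$ (again via the same exponential inequalities), which is precisely the origin of the $x^2\gamma_n|\ln\gamma_n|$ and $(1+x)\gamma_n|\ln\gamma_n|$ terms in the stated bound --- not a ``residual variance gap'' as you suggest. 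The paper's choice also makes $\ee\langle T\rangle_n=1$ exactly and keeps the conditional variance a polynomial in the $A_{ij}$'s only, which is what makes the exponential concentration tractable; with your all-in-one martingale the quadratic characteristic contains additional $k_ik_j$-weighted pieces that would each need their own exponential bounds. Finally, your write-up also omits the separate treatment of $x\in[0,1]$ (a Berry--Esseen-type comparison), which the paper needs because the Gaussian-tail shift argument used for $x\ge1$ degenerates near $x=0$.
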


\begin{rem}\label{rem2-1}
For the degree sequence $\{k_i, 1\le i\le n\}$, we assume that there exists a sequence $\{l_n,n\ge 1\}$ such that
$$
d_1l_n\le k_i\le d_2l_n   \ \ \ \text{for} \ \ \ 1\le i\le n,
$$
where
$$
d_1=\frac{(p_{(2)}+p_{(2)}^2-2p_{(3)})^2}{65^4e^2}  \ \ \ \text{and} \ \ \ d_2=\frac{(p_{(2)}+p_{(2)}^2-2p_{(3)})^2}{64^4e^2}.
$$
If $l_n\le (\log n)^5$, then for any $n\ge1$,
$$
\eta_n=\frac{\sqrt{64e\max_{1\le i\le n}k_i}}{\sqrt{m^{1/2}(p_{(2)}+p_{(2)}^2-2p_{(3)})}}
\le \frac{1}{2}n^{-1/4}(\log n)^{5/4}\le\frac{1}{2}.
$$
Similarly, we have $\varepsilon_n\le1/(8e)$. Hence, (\ref{tt}) holds.
\end{rem}

\begin{rem}\label{rem2-2}
Another example is that for every $1\le i\le n$, the degree $k_i$ of vertex $v_i$ can be controlled uniformly by some positive constant, i.e.,
$$
\max_{1\le j\le n}k_j\le \frac{(p_{(2)}+p_{(2)}^2-2p_{(3)})^{1/2}}{16e},
$$
then $\varepsilon_n<1/(8e)$. Moreover, if
$$
\max_{1\le j\le n}k_j\le \frac{p_{(2)}+p_{(2)}^2-2p_{(3)}}{256e},
$$
then $\eta_n\le 1/2$.
\end{rem}

\begin{rem}\label{rem2-3}
For the network $G$ considered in the present paper, since $A_{ii}=0$ and $A_{ij}\in\{0,1\}$ for $i\neq j$, then  we have
$$
0\le m\le n(n-1)/2\ \ \text{and}\ \ 0\le \max\limits_{1\le i\le n}k_i\le n-1.
$$
Notice that, if $\max\limits_{1\le i\le n}k_i=n\sqrt{(p_{(2)}+p_{(2)}^2-2p_{(3)})}/(16e)$, then $\varepsilon_n\ge1/(8e)$. If $\max\limits_{1\le i\le n}k_i=n(p_{(2)}+p_{(2)}^2-2p_{(3)})/(64e)$, then $\eta_n\ge\frac{1}{2}$. Hence, Theorem \ref{thm2-1} does not hold.
\end{rem}

From Theorem \ref{thm2-1}, we have the following result about the equivalence to the normal tail.
\begin{cor}\label{cor2-1}
Under the conditions in Theorem \ref{thm2-1}, it holds that for all $0\le x=o(\eta_n^{-1})$,
\beq\label{2-1-1-1}
\frac{\pp\left(\frac{Q_n-\mu_n}{\delta_n}>x\right)}{1-\Phi(x)}=1+o(1).
\deq
Moreover, the same result holds true when replacing $Q_n-\mu_n$ is replaced by $\mu_n-Q_n$.
\end{cor}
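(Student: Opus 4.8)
The plan is to read off Corollary \ref{cor2-1} directly from the logarithmic estimate of Theorem \ref{thm2-1}. Abbreviate the right-hand side of the second (simplified) bound by
\[
R_n(x)=M\Big(x^3 \eta_{n}+x^2\eta_n^2|\ln\eta_n|+(1+x)\eta_n|\ln\eta_n|\Big),
\]
so that under the hypotheses \eqref{tt} Theorem \ref{thm2-1} delivers $\big|\ln\big[\pp(\tfrac{Q_n-\mu_n}{\delta_n}>x)/(1-\Phi(x))\big]\big|\le R_n(x)$ on the range $0\le x=o(\eta_n^{-1})$. The whole corollary then collapses to the single claim that $R_n(x)\to0$ as $n\to\infty$: once this holds we get $\ln\big[\pp(\tfrac{Q_n-\mu_n}{\delta_n}>x)/(1-\Phi(x))\big]=o(1)$, and the elementary relation $e^{o(1)}=1+o(1)$ (continuity of $\exp$ at the origin) yields exactly \eqref{2-1-1-1}. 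The statement for $\mu_n-Q_n$ will follow verbatim from the ``moreover'' clause of Theorem \ref{thm2-1}, so I would carry out the argument only once.

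To establish $R_n(x)\to0$ I would first isolate the scalar behaviour of $\eta_n$. Since $\varepsilon_n=o(1)$ by assumption and $\eta_n$ is a fixed constant times $\sqrt{\varepsilon_n}$ by \eqref{q22}, we have $\eta_n\to0$; together with $\eta_n\le\tfrac12$ this gives $|\ln\eta_n|\to\infty$ yet, crucially, $\eta_n|\ln\eta_n|\to0$, because $t\ln(1/t)\to0$ as $t\downarrow0$. This already disposes of the $x$-free part $\eta_n|\ln\eta_n|$ of the third term. For the remaining, $x$-dependent pieces I would feed in the range condition in its normalized form $x\eta_n=o(1)$ and rewrite each contribution as a power of $x\eta_n$ against a surviving factor, namely $x^2\eta_n^2|\ln\eta_n|=(x\eta_n)^2|\ln\eta_n|$, $x\,\eta_n|\ln\eta_n|=(x\eta_n)|\ln\eta_n|$ and $x^3\eta_n=(x\eta_n)^3\eta_n^{-2}$.

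I expect the cubic term $x^3\eta_n=(x\eta_n)^3\eta_n^{-2}$ to be the main obstacle and the decisive one for the range of $x$: it is the only summand pairing the vanishing factor $(x\eta_n)^3$ against a \emph{diverging} power $\eta_n^{-2}$, so its smallness is equivalent to $(x\eta_n)^3=o(\eta_n^2)$, the most demanding constraint tying the growth of $x$ to the decay of $\eta_n$. By contrast the two logarithmic terms $(x\eta_n)^2|\ln\eta_n|$ and $(x\eta_n)|\ln\eta_n|$ are comparatively mild, since there the diverging factor is only $|\ln\eta_n|$, which is absorbed by any fixed positive power of $x\eta_n=o(1)$. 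Collecting these estimates gives $R_n(x)=o(1)$, whence $\ln\big[\pp(\tfrac{Q_n-\mu_n}{\delta_n}>x)/(1-\Phi(x))\big]\to0$ and \eqref{2-1-1-1} follows; the identical chain applied to the ``moreover'' part of Theorem \ref{thm2-1} gives the statement for $\mu_n-Q_n$. The one quantitative point requiring genuine care is therefore the calibration of the growth of $x$ against the factor $\eta_n^{-2}$ produced by the cubic term, so that every summand in $R_n(x)$ is simultaneously negligible.
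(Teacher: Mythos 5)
Your overall route is exactly the paper's: its proof of Corollary \ref{cor2-1} likewise just invokes Theorem \ref{thm2-1}, exponentiates the logarithmic bound, and declares \eqref{2-1-1-1}. So there is no methodological divergence to report. The problem is the one quantitative point you yourself single out at the end: the claim that
$R_n(x)=M\big(x^3\eta_n+x^2\eta_n^2|\ln\eta_n|+(1+x)\eta_n|\ln\eta_n|\big)$
tends to zero uniformly over $0\le x=o(\eta_n^{-1})$ is simply false, and no amount of extra care can repair it on that range. Take $x_n=\eta_n^{-1}/|\ln\eta_n|$ (recall $\eta_n\to0$, so $|\ln\eta_n|\to\infty$). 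Then $x_n\eta_n=|\ln\eta_n|^{-1}\to0$, so $x_n$ is admissible, yet
\[
x_n^3\eta_n=\frac{\eta_n^{-2}}{|\ln\eta_n|^{3}}\longrightarrow\infty .
\]
Your reduction ``smallness of the cubic term is equivalent to $(x\eta_n)^3=o(\eta_n^2)$'' is correct, but that condition is strictly stronger than $x\eta_n=o(1)$: it amounts to $x=o(\eta_n^{-1/3})$. The sentence ``collecting these estimates gives $R_n(x)=o(1)$'' is therefore a non sequitur.

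The logarithmic terms are not harmless either: $x\eta_n=o(1)$ does not force $x\eta_n$ to be a fixed positive power of $\eta_n$, so $|\ln\eta_n|$ need not be absorbed. With $x_n\eta_n=|\ln\eta_n|^{-1/4}$ one gets $(x_n\eta_n)^2|\ln\eta_n|=|\ln\eta_n|^{1/2}\to\infty$. What Theorem \ref{thm2-1} actually yields is \eqref{2-1-1-1} on the smaller range $0\le x=o(\eta_n^{-1/3})$: there $x^3\eta_n=o(1)$ by definition, $x^2\eta_n^2|\ln\eta_n|=o\big(\eta_n^{4/3}|\ln\eta_n|\big)=o(1)$, and $(1+x)\eta_n|\ln\eta_n|=o\big(\eta_n^{2/3}|\ln\eta_n|\big)=o(1)$. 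This shrinking of the range is exactly the phenomenon in the classical Cram\'er theory, where the expansion holds for $x=o(n^{1/2})$ but the normal-tail equivalence only for $x=o(n^{1/6})$. To be fair to you, the paper's own two-line proof makes the identical unjustified leap, so your proposal faithfully reproduces the published argument --- including its defect; a correct statement would either restrict the range to $x=o(\eta_n^{-1/3})$ or require additional information beyond the bound of Theorem \ref{thm2-1}.
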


\begin{cor}\label{cor2-2}
Under the conditions in Theorem \ref{thm2-1}, we have
$$
\aligned
\sup_{x\in\rr}\Bigg|\pp\left(\frac{Q_n-\mu_n}{\delta_n}\le x\right)-\Phi(x)\Bigg|\le M\eta_n|\ln\eta_n|.
\endaligned
$$
\end{cor}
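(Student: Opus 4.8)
The plan is to deduce this Berry--Esseen bound from the Cram\'er moderate deviation expansion of Theorem~\ref{thm2-1} by the standard device of converting a relative tail error into an absolute one. I write $W_n=(Q_n-\mu_n)/\delta_n$ and abbreviate the right-hand bound of Theorem~\ref{thm2-1} as
\[
\Delta_n(x)=M\Big(x^3\eta_n+x^2\eta_n^2|\ln\eta_n|+(1+x)\eta_n|\ln\eta_n|\Big).
\]
Since $\pp(W_n\le x)-\Phi(x)=-\big(\pp(W_n>x)-(1-\Phi(x))\big)$, it suffices to bound $|\pp(W_n>x)-(1-\Phi(x))|$ uniformly in $x$. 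By the symmetric ``moreover'' statement of Theorem~\ref{thm2-1} applied to $-W_n=(\mu_n-Q_n)/\delta_n$, a bound valid for $x\ge0$ transfers to $x<0$ through $\Phi(-y)=1-\Phi(y)$ together with the monotonicity of the distribution function and the continuity of $\Phi$ (which lets one sandwich $\pp(W_n\le x)$ between $\pp(W_n<x')$ for $x'\downarrow x$, so that the atoms of the discrete variable $W_n$ are harmless); hence I only treat $x\ge0$.

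First I would fix the cutoff $x_0=\sqrt{2|\ln\eta_n|}$. Because $\eta_n\le 1/2$ one has $e^{-x_0^2/2}=\eta_n$, and the Mills-ratio estimate $1-\Phi(x)\le e^{-x^2/2}$ gives $1-\Phi(x_0)\le \eta_n$; moreover $x_0=o(\eta_n^{-1})$ and $\Delta_n(x_0)\to0$, so Theorem~\ref{thm2-1} applies throughout $[0,x_0]$. For $0\le x\le x_0$, Theorem~\ref{thm2-1} yields $\big|\ln\frac{\pp(W_n>x)}{1-\Phi(x)}\big|\le \Delta_n(x)$, whence
\[
|\pp(W_n>x)-(1-\Phi(x))|\le (1-\Phi(x))\big(e^{\Delta_n(x)}-1\big)\le 2(1-\Phi(x))\,\Delta_n(x),
\]
using $\Delta_n(x)\le\Delta_n(x_0)\to0$. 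Now $1-\Phi(x)\le e^{-x^2/2}$ together with the elementary boundedness of $x^3e^{-x^2/2}$, $x^2e^{-x^2/2}$ and $(1+x)e^{-x^2/2}$ on $[0,\infty)$ shows that the three summands of $(1-\Phi(x))\Delta_n(x)$ are at most constant multiples of $\eta_n$, $\eta_n^2|\ln\eta_n|$ and $\eta_n|\ln\eta_n|$ respectively. Since $\eta_n\le\eta_n|\ln\eta_n|$ and $\eta_n^2|\ln\eta_n|\le\eta_n|\ln\eta_n|$ for large $n$, the dominant contribution is the term $(1+x)\eta_n|\ln\eta_n|$, and I obtain $|\pp(W_n>x)-(1-\Phi(x))|\le M\eta_n|\ln\eta_n|$ for all $0\le x\le x_0$.

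For $x>x_0$ I would use only crude monotonicity: $1-\Phi(x)\le 1-\Phi(x_0)\le \eta_n$, while applying Theorem~\ref{thm2-1} at $x_0$ gives $\pp(W_n>x)\le\pp(W_n>x_0)\le (1-\Phi(x_0))e^{\Delta_n(x_0)}\le M\eta_n$. Hence $|\pp(W_n>x)-(1-\Phi(x))|\le M\eta_n\le M\eta_n|\ln\eta_n|$ on this range as well. Combining the two ranges, taking the supremum over $x\ge0$, and invoking the symmetry described above for $x<0$ yields $\sup_{x\in\rr}|\pp(W_n\le x)-\Phi(x)|\le M\eta_n|\ln\eta_n|$.

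The hard part is the correct calibration of the cutoff $x_0$: it must be large enough that the Gaussian tail $1-\Phi(x_0)$ has decayed to order $\eta_n$, yet small enough that $x_0=o(\eta_n^{-1})$ and $\Delta_n(x_0)\to0$, so that the expansion of Theorem~\ref{thm2-1} stays in force and the linearization $e^{\Delta}-1\le2\Delta$ is legitimate. The choice $x_0=\sqrt{2|\ln\eta_n|}$ is exactly the value equating $e^{-x_0^2/2}$ with $\eta_n$, and verifying $\Delta_n(x_0)\to0$ amounts to checking that $\eta_n|\ln\eta_n|^{3/2}\to0$ and $\eta_n^2|\ln\eta_n|^2\to0$, which follow from $\eta_n\to0$. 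Everything else is the routine boundedness and monotonicity bookkeeping indicated above.
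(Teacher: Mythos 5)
Your proof is correct and takes essentially the same route as the paper: split the supremum at a growing cutoff, apply the expansion of Theorem \ref{thm2-1} (and its symmetric version for $\mu_n-Q_n$, with the atom/sandwich care you note) on the bounded range, and beyond the cutoff use Gaussian-tail decay plus monotonicity of the tail probability. The only difference is calibration — you cut at $x_0=\sqrt{2|\ln\eta_n|}$ while the paper cuts at $\kappa_n=\eta_n^{-1/4}$ — and both choices serve the identical purpose.
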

\begin{rem}\label{rem2-4}
Assume that the degree sequence $\{k_i, 1\le i\le n\}$ satisfies the following condition:
$$
\frac{\max_{1\le i\le n}k_i}{\sqrt{m}}\le Mn^{-1/2},
$$
then, we derive that
$$
\sup_{x\in\rr}\Bigg|\pp\left(\frac{Q_n-\mu_n}{\delta_n}\le x\right)-\Phi(x)\Bigg|\le Mn^{-1/4}\log n.
$$
\end{rem}

Next we give the Cram\'er moderate deviation for the normalized sums $\dsp\frac{Q_n-\mu_n}{\sigma_n}$.

\begin{thm}\label{thm2-2}
Under the conditions in Theorem \ref{thm2-1}, it holds that for all $0\le x=o(\eta_n^{-1})$,
\begin{align}
\Bigg|\ln\frac{\pp\left(\frac{Q_n-\mu_n}{\sigma_n}>x\right)}{1-\Phi(x)}\Bigg|\le M\Big(x^3 \eta_{n}+x^2\eta_n^2|\ln\eta_n|
+(1+x)\eta_n|\ln\eta_n|\Big)~\label{2-1}
\end{align}
and
\beq\label{ooo-1}
\frac{\pp\left(\frac{Q_n-\mu_n}{\sigma_n}>x\right)}{1-\Phi(x)}=1+o(1).
\deq
Moreover, the same results hold true when replacing $Q_n-\mu_n$ is replaced by $\mu_n-Q_n$.
\end{thm}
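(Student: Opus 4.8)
The plan is to deduce Theorem \ref{thm2-2} from Theorem \ref{thm2-1} by a change of variable that trades the deterministic normalization $\delta_n$ for the true standard deviation $\sigma_n$. Write $r_n:=\sigma_n/\delta_n$. Since $\delta_n$ is deterministic, for every $x\ge0$ we have the exact identity
\[
\pp\!\left(\frac{Q_n-\mu_n}{\sigma_n}>x\right)=\pp\!\left(\frac{Q_n-\mu_n}{\delta_n}>xr_n\right).
\]
Setting $y:=xr_n$ and decomposing
\[
\ln\frac{\pp((Q_n-\mu_n)/\sigma_n>x)}{1-\Phi(x)}=\ln\frac{\pp((Q_n-\mu_n)/\delta_n>y)}{1-\Phi(y)}+\ln\frac{1-\Phi(y)}{1-\Phi(x)},
\]
the first term is controlled by Theorem \ref{thm2-1} once we verify $y=o(\eta_n^{-1})$, and the second is a purely analytic comparison of Gaussian tails. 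The whole argument therefore rests on two estimates: a sharp control of $r_n-1$, and a bound on the Gaussian-tail ratio.

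First I would estimate $r_n$. From (\ref{2-3-1}) and (\ref{q1}),
\[
r_n^2=\frac{\sigma_n^2}{\delta_n^2}=\frac{1}{2m}\sum_{1\le i\ne j\le n}B_{ij}^2+\frac{p_{(3)}-p_{(2)}^2}{m(p_{(2)}+p_{(2)}^2-2p_{(3)})}\sum_{i=1}^nB_{ii}^2.
\]
Using $A_{ij}\in\{0,1\}$, $A_{ii}=0$ (so that $B_{ii}=-k_i^2/(2m)$) and $\sum_{i\ne j}A_{ij}^2=\sum_{i\ne j}A_{ij}=2m$, expanding $B_{ij}=A_{ij}-k_ik_j/(2m)$ from (\ref{h1}) gives
\[
\frac{1}{2m}\sum_{i\ne j}B_{ij}^2=1-\frac{1}{2m^2}\sum_{i\ne j}A_{ij}k_ik_j+\frac{1}{8m^3}\Big[\big(\sum_ik_i^2\big)^2-\sum_ik_i^4\Big].
\]
Each correction term, together with the diagonal contribution, is bounded by a constant multiple of $(\max_ik_i)^2/m$: indeed $\sum_{i\ne j}A_{ij}k_ik_j\le(\max_ik_i)^2\cdot2m$ and $(\sum_ik_i^2)^2\le(\max_ik_i)^2(2m)^2$, while $\sum_iB_{ii}^2\le(\max_ik_i)^3/(2m)$ with $\max_ik_i\le2m$. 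Recalling (\ref{q21}) and (\ref{q22}), $(\max_ik_i)^2/m\asymp\varepsilon_n^2\asymp\eta_n^4$, so $|r_n^2-1|\le M\eta_n^4$ and hence $|r_n-1|\le M\eta_n^4$, with $r_n\to1$; in particular $\tfrac12\le r_n\le2$ for all large $n$, so $y=xr_n=o(\eta_n^{-1})$ whenever $x=o(\eta_n^{-1})$, and Theorem \ref{thm2-1} applies at $y$, contributing $M(y^3\eta_n+y^2\eta_n^2|\ln\eta_n|+(1+y)\eta_n|\ln\eta_n|)\le M(x^3\eta_n+x^2\eta_n^2|\ln\eta_n|+(1+x)\eta_n|\ln\eta_n|)$.

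For the Gaussian-tail ratio I would use the Mills-ratio bounds $t<\phi(t)/(1-\Phi(t))<t+1/t$ for $t>0$, which give, for $0<x\le y$ and symmetrically for $y\le x$,
\[
\Big|\ln\frac{1-\Phi(y)}{1-\Phi(x)}\Big|=\int_{\min(x,y)}^{\max(x,y)}\frac{\phi(t)}{1-\Phi(t)}\,dt\le\frac{|y^2-x^2|}{2}+\Big|\ln\frac{y}{x}\Big|=\frac12x^2|r_n^2-1|+|\ln r_n|.
\]
Since $x=o(\eta_n^{-1})$ and $\eta_n\le1/2$ forces $\eta_n^2\le|\ln\eta_n|$, the estimate $|r_n^2-1|\le M\eta_n^4$ yields $x^2|r_n^2-1|\le Mx^2\eta_n^4\le Mx^2\eta_n^2|\ln\eta_n|$, while $|\ln r_n|\le M\eta_n^4\le M\eta_n|\ln\eta_n|$; both are absorbed into the right-hand side of (\ref{2-1}). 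Adding the two contributions proves (\ref{2-1}). The equivalence (\ref{ooo-1}) then follows by applying Corollary \ref{cor2-1} at $y=xr_n$ together with $\tfrac{1-\Phi(xr_n)}{1-\Phi(x)}\to1$, a consequence of the same tail bound since $x^2|r_n^2-1|=o(\eta_n^2)\to0$ and $|\ln r_n|\to0$. The lower-tail statement (with $\mu_n-Q_n$) is identical, using the corresponding half of Theorem \ref{thm2-1}.

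The main obstacle, and the only place real work is needed, is the variance comparison: one must show that the degree sums entering $\sigma_n^2/\delta_n^2$ differ from $1$ by at most $O(\eta_n^4)$, not merely by $o(1)$. The crude bounds above suffice precisely because $|r_n^2-1|$ enters multiplied by $x^2=o(\eta_n^{-2})$; had the ratio deviated by order $\eta_n^2$ rather than $\eta_n^4$, the term $x^2|r_n^2-1|$ would not be dominated by $x^2\eta_n^2|\ln\eta_n|$. Care is thus required in bounding $\sum_{i\ne j}A_{ij}k_ik_j$ and $\big(\sum_ik_i^2\big)^2$ by the correct powers of $\max_ik_i$ and $m$, which is what converts the heuristic $r_n\to1$ into the quantitative rate the theorem demands.
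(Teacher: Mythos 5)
Your proposal is correct, and its skeleton is the same as the paper's: write $\pp\left((Q_n-\mu_n)/\sigma_n>x\right)=\pp\left((Q_n-\mu_n)/\delta_n>xr_n\right)$ with $r_n=\sigma_n/\delta_n$, apply Theorem~\ref{thm2-1} at the rescaled point, and separately compare the two Gaussian tails. The genuine difference is in how $r_n$ is controlled, and there your version is tighter than the paper's in a way that matters. The paper proves only the one-sided bound $\sigma_n^2\ge\delta_n^2\left(1-\sqrt{2}\max_ik_i/\sqrt{m}\right)$ (its (\ref{l-3}), a deviation of order $\varepsilon_n\asymp\eta_n^2$), and then in (\ref{oo-5}) it bounds $\left|\ln\left[(1-\Phi(xr_n))/(1-\Phi(x))\right]\right|$ by the same quantity evaluated at the lower endpoint $1-\sqrt{2}\max_ik_i/\sqrt{m}$; that replacement is justified only if one also has an upper bound on $r_n$ close to $1$, which the paper never establishes. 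In fact $r_n>1$ is possible under the hypotheses: for a disjoint union of $N$ stars $K_{1,6}$ one has $\varepsilon_n\to0$ while $r_n^2-1\ge c/N>0$, because the positive term $(\sum_ik_i^2)^2/(8m^3)$ beats the negative term $\sum_{i\ne j}A_{ij}k_ik_j/(2m^2)$. Your two-sided estimate $|r_n^2-1|\le M\eta_n^4$, obtained from the elementary bounds $\sum_{i\ne j}A_{ij}k_ik_j\le 2m(\max_ik_i)^2$, $(\sum_ik_i^2)^2\le 4m^2(\max_ik_i)^2$ and $\sum_iB_{ii}^2\le(\max_ik_i)^3/(2m)$, closes exactly this gap; the Mills-ratio integral you use in place of the paper's (\ref{d-1}) then absorbs $x^2|r_n^2-1|\le Mx^2\eta_n^4$ and $|\ln r_n|\le M\eta_n^4$ into the right-hand side of (\ref{2-1}) immediately, so both routes deliver the same final bound. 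One caveat you inherit from the paper rather than introduce: deducing (\ref{ooo-1}) on the whole range $x=o(\eta_n^{-1})$ (whether via Corollary~\ref{cor2-1}, as you do, or directly from (\ref{2-1}), as the paper does) requires the right-hand side of (\ref{2-1}) to tend to zero, which holds only for $x=o(\eta_n^{-1/3})$; that defect belongs to the paper's statement, and your argument adds no new error to it.
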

\begin{cor}\label{cor2-21}
Under the conditions in Theorem \ref{thm2-2}, we have
$$
\aligned
\sup_{x\in\rr}\Bigg|\pp\left(\frac{Q_n-\mu_n}{\sigma_n}\le x\right)-\Phi(x)\Bigg|\le M\eta_n|\ln\eta_n|.
\endaligned
$$
\end{cor}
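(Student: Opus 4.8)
The plan is to extract this Kolmogorov (Berry--Esseen) bound from the one–sided Cram\'er moderate deviation expansion \eqref{2-1} of Theorem \ref{thm2-2}, in the standard way one deduces a uniform distance estimate from a moderate deviation result. Write $Y_n=(Q_n-\mu_n)/\sigma_n$, $\bar F_n(x)=\pp(Y_n>x)$ and $\bar\Phi(x)=1-\Phi(x)$. Since $\pp(Y_n\le x)=1-\bar F_n(x)$ and $\Phi(x)=1-\bar\Phi(x)$, we have $|\pp(Y_n\le x)-\Phi(x)|=|\bar F_n(x)-\bar\Phi(x)|$ for every $x\in\rr$. Because \eqref{2-1} holds verbatim for $\mu_n-Q_n$ as well, the lower tail is controlled by the same estimate applied to $-Y_n$; hence it suffices to bound $\sup_{x\ge0}|\bar F_n(x)-\bar\Phi(x)|$, together with the symmetric quantity for $-Y_n$.

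Set $R_n(x)=M\big(x^3\eta_n+x^2\eta_n^2|\ln\eta_n|+(1+x)\eta_n|\ln\eta_n|\big)$, the right-hand side of \eqref{2-1}, so that $\big|\ln(\bar F_n(x)/\bar\Phi(x))\big|\le R_n(x)$ on the admissible range, and fix the threshold $x_n=\sqrt{2|\ln\eta_n|}$. First I check that this threshold is legitimate: since $\eta_n\to0$, any power of $|\ln\eta_n|$ is negligible against $\eta_n^{-1}$, so $x_n=o(\eta_n^{-1})$ and \eqref{2-1} applies on $[0,x_n]$; substituting $x=x_n$ shows $R_n(x_n)\to0$, because $x_n^3\eta_n$, $x_n^2\eta_n^2|\ln\eta_n|$ and $(1+x_n)\eta_n|\ln\eta_n|$ each tend to $0$ (polynomial in $|\ln\eta_n|$ times $\eta_n$). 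In particular $R_n(x)\le R_n(x_n)\le1$ for all $0\le x\le x_n$ and $n$ large, whence $e^{R_n(x)}-1\le 2R_n(x)$.

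On the central range $0\le x\le x_n$ I use $|\bar F_n(x)-\bar\Phi(x)|=\bar\Phi(x)\,\big|\bar F_n(x)/\bar\Phi(x)-1\big|\le\bar\Phi(x)\,(e^{R_n(x)}-1)\le 2\bar\Phi(x)R_n(x)$. Combining the Gaussian tail bound $\bar\Phi(x)\le e^{-x^2/2}$ with the elementary fact $\sup_{x\ge0}x^ke^{-x^2/2}<\infty$, each of the three terms of $\bar\Phi(x)R_n(x)$ is controlled: the $x^3\eta_n$ term by $M\eta_n$, the $x^2\eta_n^2|\ln\eta_n|$ term by $M\eta_n^2|\ln\eta_n|\le M\eta_n|\ln\eta_n|$, and the $(1+x)\eta_n|\ln\eta_n|$ term by $M\eta_n|\ln\eta_n|$. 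Thus $\sup_{0\le x\le x_n}|\bar F_n(x)-\bar\Phi(x)|\le M\eta_n|\ln\eta_n|$. On the far tail $x\ge x_n$ I bound crudely, using monotonicity, $|\bar F_n(x)-\bar\Phi(x)|\le\bar F_n(x)+\bar\Phi(x)\le\bar F_n(x_n)+\bar\Phi(x_n)$; since $\bar F_n(x_n)\le\bar\Phi(x_n)e^{R_n(x_n)}\le e\,\bar\Phi(x_n)$ and $\bar\Phi(x_n)\le e^{-x_n^2/2}=\eta_n\le M\eta_n|\ln\eta_n|$, this range contributes at most $M\eta_n|\ln\eta_n|$ as well. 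Adding the two ranges and the symmetric lower-tail estimate gives the claim.

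The one genuinely delicate step is the calibration of $x_n$. It must be large enough that the Gaussian tail $\bar\Phi(x_n)$ is already of order $\eta_n$, which forces $x_n\gtrsim\sqrt{2|\ln\eta_n|}$; yet it must be small enough that $x_n=o(\eta_n^{-1})$ and $R_n(x_n)=o(1)$, so that \eqref{2-1} remains valid and $e^{R_n}$ stays bounded on $[0,x_n]$. The choice $x_n=\sqrt{2|\ln\eta_n|}$ meets both constraints simultaneously, and the growth comparison "a polynomial in $|\ln\eta_n|$ times $\eta_n$ tends to zero'' is exactly what keeps every contribution inside the target order $\eta_n|\ln\eta_n|$.
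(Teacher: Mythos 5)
Your proposal is correct and follows essentially the same route as the paper's own argument: the paper proves Corollary \ref{cor2-2} by splitting the supremum at a threshold $\kappa_n=\eta_n^{-1/4}$, bounding the central range $|x|\le\kappa_n$ via the moderate-deviation expansion together with Gaussian tail estimates, and handling $|x|>\kappa_n$ by tail decay, then declares the present corollary analogous with Theorem \ref{thm2-2} in place of Theorem \ref{thm2-1}. Your only deviation is the calibrated threshold $x_n=\sqrt{2|\ln\eta_n|}$ instead of $\eta_n^{-1/4}$; both lie in the admissible range $o(\eta_n^{-1})$ and lead to the same bound $M\eta_n|\ln\eta_n|$.
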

\section{Some important lemmas}
Let $(\xi_i,\FF_i)_{i=0,\ldots,n}$ be a finite sequence of martingale differences, defined on a probability space $(\Omega,\FF,\pp)$, where $\xi_0=0,\ \{\emptyset,\Omega\}=\FF_{0}\subseteq\ldots\subseteq\FF_n\subseteq\FF$ are increasing $\sigma$-fields and $(\xi_i)_{i=1,\ldots,n}$ are allowed to depend on $n$. Set
$$
X_0=0,\ \ \ \ \ X_k=\sum_{i=1}^k\xi_i,\ \ \ k=1,\ldots,n.
$$
Then $(X_i,\FF_i)_{i=0,\ldots,n}$ is a martingale. Denote by $\langle X\rangle$ the quadratic characteristic of the martingale $X=(X_k,\FF_k)_{k=0,\ldots,n}$, that is
$$
\langle X\rangle_0=0,\ \ \ \ \ \langle X\rangle_k=\sum_{i=1}^k\ee(\xi_i^2|\FF_{i-1}), \ \ \ k=1,\ldots,n.
$$
In the sequel we shall use the following conditions:
\begin{enumerate}[{\rm(i)}]
\item There exists a number $\varepsilon_n\in(0,\frac{1}{2}]$ such that
   $$
   |\ee\left(\xi_i^k|\FF_{i-1}\right)|\le\frac{1}{2}k!\varepsilon_n^{k-2}\ee\left(\xi_i^2|\FF_{i-1}\right),
   \ \ \ \text{for\ all} \ k\ge 2 \ \text{and}\ 1\le i\le n;
   $$
\item There exist a number $\eta_n\in(0,\frac{1}{2}]$ and a positive constant $M$ such that for all $x>0$,
   $$
   \pp\left(|\langle X\rangle_n-1|\ge x\right)\le M\exp\left\{-x\eta_n^{-2}\right\}.
   $$
\end{enumerate}

In the proof of Theorem \ref{thm2-1}, we need the following Cram\'{e}r's moderate deviations for martingales, which is a simple consequence of Theorem 2.2 of Fan and Shao \cite{F-S-2022}.
\begin{lem}\label{lem2-1}$($\cite[Remark 2.1]{F-S-2022}$)$
Assume that the conditions (i) and (ii) are satisfied. Then for all $0\le x=o(\min\{\varepsilon_n^{-1},\eta_n^{-1}\})$, we have
$$
\Bigg|\ln\frac{\pp(X_n>x)}{1-\Phi(x)}\Bigg|\le M\Big(x^3(\varepsilon_n+\eta_n)+(1+x)\left(\eta_n|\ln\eta_n|+\varepsilon_n|\ln\varepsilon_n|\right)\Big).
$$
\end{lem}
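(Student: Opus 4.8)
Since the statement is presented as the specialization (Remark 2.1) of Theorem 2.2 of Fan and Shao \cite{F-S-2022}, the plan is to recall the conjugate-measure machinery that drives that theorem and to check that hypotheses (i) and (ii) feed into it so as to produce exactly the two symmetric families of error terms: those carrying $\varepsilon_n$ and those carrying $\eta_n$. Throughout I fix $x\ge0$ in the stated range and a tilting parameter $\lambda=\lambda(x)>0$ to be chosen near $x$. The first step is to introduce the cumulant process and the exponential martingale
\[
\Psi_k(\lambda)=\sum_{i=1}^k\ln\ee\bigl(e^{\lambda\xi_i}\mid\FF_{i-1}\bigr),\qquad Z_k(\lambda)=\exp\{\lambda X_k-\Psi_k(\lambda)\},
\]
so that $(Z_k(\lambda),\FF_k)$ is a positive martingale of unit mean and $d\pp_\lambda=Z_n(\lambda)\,d\pp$ defines the conjugate probability. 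The tail is then rewritten as
\[
\pp(X_n>x)=\ee_\lambda\bigl[e^{-\lambda X_n+\Psi_n(\lambda)}\,\mathbf{1}_{\{X_n>x\}}\bigr],
\]
which reduces the estimate to a saddle-point analysis of $X_n$ under $\pp_\lambda$, with the saddle $\lambda$ determined by $\Psi_n'(\lambda)\approx x$.

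Next I would extract the $\varepsilon_n$-terms from condition (i). Because $\ee(\xi_i\mid\FF_{i-1})=0$, the Bernstein-type bound on the conditional moments gives, for $|\lambda|\varepsilon_n$ bounded away from $1$,
\[
\ln\ee\bigl(e^{\lambda\xi_i}\mid\FF_{i-1}\bigr)=\tfrac12\lambda^2\,\ee(\xi_i^2\mid\FF_{i-1})+r_i(\lambda),\qquad |r_i(\lambda)|\le M|\lambda|^3\varepsilon_n\,\ee(\xi_i^2\mid\FF_{i-1}),
\]
so that $\Psi_n(\lambda)=\tfrac12\lambda^2\langle X\rangle_n+R_n(\lambda)$ with $|R_n(\lambda)|\le M|\lambda|^3\varepsilon_n\langle X\rangle_n$. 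Since $\langle X\rangle_n$ concentrates at $1$ by (ii), the choice $\lambda\asymp x$ yields the Gaussian principal term matching $1-\Phi(x)$, while $R_n(\lambda)$, together with the normalization against $1-\Phi(x)$, is the origin of the $x^3\varepsilon_n$ correction and the lower-order $(1+x)\varepsilon_n|\ln\varepsilon_n|$ term. This part is nothing but the classical Cram\'er correction for the conditional moments, and the constraint $|\lambda|\varepsilon_n<1$ is precisely what forces the endpoint $x=o(\varepsilon_n^{-1})$.

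The delicate step, and the one I expect to be the main obstacle, is the replacement of $\langle X\rangle_n$ by $1$ inside $\Psi_n(\lambda)$ using condition (ii). Writing $\Delta=\langle X\rangle_n-1$, I must control $\tfrac12\lambda^2\Delta\asymp x^2\Delta$ uniformly, and the exponential concentration enters twice. First, the upper bound through the exponential martingale needs $\ee\bigl[\exp\{s\Delta\}\bigr]$ with $s=\tfrac12\lambda^2\asymp x^2$; condition (ii) makes this finite with $\ln\ee[\exp\{s\Delta\}]\le Ms\eta_n^2/(1-s\eta_n^2)$, which is $O(x^2\eta_n^2)$ exactly because $x=o(\eta_n^{-1})$ keeps $s\eta_n^2=o(1)$, and this is the constraint that produces the second endpoint $x=o(\eta_n^{-1})$. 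Second, to pass to the matching lower bound one truncates $\langle X\rangle_n$ to the window $\{|\Delta|\le\eta_n^2|\ln\eta_n|\}$, whose complement has probability at most a power of $\eta_n$ by (ii); balancing the exponential tail $Me^{-t\eta_n^{-2}}$ against the retained region is what forces the cut-off $t\asymp\eta_n^2|\ln\eta_n|$. Propagating this truncation through the saddle-point expansion generates the logarithmic factor $(1+x)\eta_n|\ln\eta_n|$, while the retained variance deviation contributes only $O(x^2\eta_n^2)$; since $x=o(\eta_n^{-1})$ forces $x^2\eta_n^2\le(1+x)\eta_n|\ln\eta_n|$, both are subsumed in the symmetric $\eta_n$-family $x^3\eta_n+(1+x)\eta_n|\ln\eta_n|$ mirroring the $\varepsilon_n$-terms.

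Finally I would assemble the pieces: split $\{X_n>x\}$ according to the truncation of $\langle X\rangle_n$, estimate the main contribution by the Gaussian saddle-point computation under $\pp_\lambda$ with the two remainder bounds above, control the exceptional contribution by (ii), and collect the errors to obtain
\[
\Bigl|\ln\tfrac{\pp(X_n>x)}{1-\Phi(x)}\Bigr|\le M\bigl(x^3(\varepsilon_n+\eta_n)+(1+x)(\eta_n|\ln\eta_n|+\varepsilon_n|\ln\varepsilon_n|)\bigr)
\]
for $0\le x=o(\min\{\varepsilon_n^{-1},\eta_n^{-1}\})$, the two endpoints arising respectively from the convergence of the cumulant expansion in (i) and the finiteness of the exponential moment of $\langle X\rangle_n$ in (ii). The symmetry of the final bound simply reflects that $\varepsilon_n$ governs the conditional-moment expansion and $\eta_n$ the quadratic-characteristic concentration, entering the saddle-point estimate through structurally parallel mechanisms.
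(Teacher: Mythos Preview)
The paper does not prove this lemma at all: it is stated with the attribution ``\cite[Remark 2.1]{F-S-2022}'' and is invoked as a black box in the proof of Theorem~\ref{thm2-1}. There is thus no paper-proof to compare your proposal against. Your sketch of the underlying Cram\'er conjugate-measure argument (exponential tilting, expansion of the conditional cumulant under the Bernstein condition (i), truncation of $\langle X\rangle_n$ on a set of width $\asymp\eta_n^2|\ln\eta_n|$ controlled by (ii)) is a reasonable outline of the Fan--Shao machinery, and the bookkeeping you describe for how $\varepsilon_n$ and $\eta_n$ enter the error is consistent with the shape of the stated bound. But for the purposes of this paper no such argument is required: the lemma is imported directly from \cite{F-S-2022}, and everything you wrote would be a reconstruction of \emph{their} proof rather than of anything in the present paper.
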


\begin{lem}\label{lem-h-1}$($\cite[(2.16)]{G-L-Z}$)$
If $\xi_1,\xi_2,\ldots,\xi_n$ are centered and independent random variables, then for any $p\ge 2$,
$$
\ee\Bigg|\sum_{i=1}^n\xi_i\Bigg|^p\le 2^p(p-1)^{p/2}\ee\left(\sum_{i=1}^n\xi_i^2\right)^{p/2}.
$$
\end{lem}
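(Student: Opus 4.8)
The plan is to reduce the inequality to a Khintchine-type estimate by symmetrization and random signs, rather than to attack the partial sums $X_k=\sum_{i\le k}\xi_i$ directly as a martingale. The reason is that a generic martingale square-function (Burkholder) bound would only deliver a constant of order $(p-1)^p$, whose exponent on $p-1$ is twice too large to match the claimed $2^p(p-1)^{p/2}$. The sharper $(p-1)^{p/2}$ growth is precisely the Gaussian/Khintchine scaling, so the argument must exploit independence more fully than the generic martingale inequality does.

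First I would symmetrize. Let $(\xi_i')_{i=1}^n$ be an independent copy of $(\xi_i)_{i=1}^n$ and set $\tilde\xi_i=\xi_i-\xi_i'$. Since each $\xi_i$ is centered, $\ee'\bigl[\sum_i(\xi_i-\xi_i')\bigr]=\sum_i\xi_i$, so Jensen's inequality applied to $|\cdot|^p$ conditionally on the $\xi_i$ gives
$$
\ee\Bigl|\sum_{i=1}^n\xi_i\Bigr|^p\le \ee\Bigl|\sum_{i=1}^n\tilde\xi_i\Bigr|^p .
$$
The variables $\tilde\xi_i$ are independent and symmetric, so if $(\epsilon_i)_{i=1}^n$ are i.i.d. Rademacher signs independent of everything, then $(\tilde\xi_i)\stackrel{d}{=}(\epsilon_i\tilde\xi_i)$, whence $\ee|\sum_i\tilde\xi_i|^p=\ee_{\tilde\xi}\,\ee_\epsilon|\sum_i\epsilon_i\tilde\xi_i|^p$.

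Next I would freeze the $\tilde\xi_i$ and apply Khintchine's inequality to the Rademacher sum with its \emph{sharp} constant. For $p\ge 2$ the hypercontractive (equivalently, Haagerup's sharp) estimate yields $\ee_\epsilon|\sum_i\epsilon_i a_i|^p\le (p-1)^{p/2}(\sum_i a_i^2)^{p/2}$ for every real sequence $(a_i)$; taking $a_i=\tilde\xi_i$ and then the expectation in $\tilde\xi$ gives
$$
\ee\Bigl|\sum_{i=1}^n\tilde\xi_i\Bigr|^p\le (p-1)^{p/2}\,\ee\Bigl(\sum_{i=1}^n\tilde\xi_i^2\Bigr)^{p/2}.
$$
Finally I would undo the symmetrization at the level of square functions: from $\tilde\xi_i^2\le 2\xi_i^2+2\xi_i'^2$, the convexity bound $(a+b)^{p/2}\le 2^{p/2-1}(a^{p/2}+b^{p/2})$, and the identity $\ee(\sum_i\xi_i'^2)^{p/2}=\ee(\sum_i\xi_i^2)^{p/2}$, one obtains $\ee(\sum_i\tilde\xi_i^2)^{p/2}\le 2^p\,\ee(\sum_i\xi_i^2)^{p/2}$. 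Chaining the three displays produces the constant $2^p(p-1)^{p/2}$ exactly.

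I expect the only genuine obstacle to be the Khintchine step: to land on the stated constant one must invoke the sharp constant $\sqrt{p-1}$, rather than the elementary but lossy bounds coming from $\cosh t\le e^{t^2/2}$ or from Burkholder's inequality, each of which overshoots. Everything else—symmetrization, randomization, and the two factor-of-two square-function comparisons—is routine bookkeeping, and I would keep track of the powers of $2$ carefully so that they combine to exactly $2^p$.
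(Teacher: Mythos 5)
Your proof is correct, but note that the paper itself does not prove this lemma at all: it is quoted verbatim as inequality (2.16) of Gin\'e--Lata{\l}a--Zinn, so there is no internal argument to compare against. Your symmetrization route is a legitimate self-contained derivation and the constant bookkeeping works out exactly: Jensen's inequality (conditioning on $(\xi_i)$ and using $\ee\xi_i'=0$) gives $\ee|\sum_i\xi_i|^p\le\ee|\sum_i\tilde\xi_i|^p$ with no loss; the Rademacher randomization is exact in distribution; the conditional Khintchine bound gives the factor $(p-1)^{p/2}$; and the square-function desymmetrization $\tilde\xi_i^2\le 2\xi_i^2+2\xi_i'^2$ together with $(a+b)^{p/2}\le 2^{p/2-1}(a^{p/2}+b^{p/2})$ yields $\ee(\sum_i\tilde\xi_i^2)^{p/2}\le 2^{p/2-1}\cdot 2\cdot 2^{p/2}\,\ee(\sum_i\xi_i^2)^{p/2}=2^p\,\ee(\sum_i\xi_i^2)^{p/2}$, producing precisely $2^p(p-1)^{p/2}$. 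One terminological correction: the constant $\sqrt{p-1}$ in Khintchine's inequality is \emph{not} Haagerup's sharp constant (which is $\|g\|_p=\sqrt{2}\,(\Gamma((p+1)/2)/\sqrt{\pi})^{1/p}$ for a standard Gaussian $g$, strictly smaller than $\sqrt{p-1}$ for $p>2$); the bound $\|\sum_i\epsilon_i a_i\|_p\le\sqrt{p-1}\,\|a\|_2$ is the hypercontractive (Bonami) estimate for degree-one Rademacher chaos. Since that estimate is a valid upper bound and is exactly what the target constant requires, this mislabeling does not affect the proof; your preliminary observation that a generic Burkholder square-function argument would only give a constant of order $(p-1)^p$ is also a sound reason for preferring the independence-based route.
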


\begin{lem}\label{lem-h-2}
Under the conditions in Lemma \ref{lem-h-1}, if there exist positive constants $\{b_{i},1\le i\le n\}$, such that $|\xi_i|\le b_{i}$ for all $1\le i\le n$, then for any $x>0$, we have
\beq\label{0}
\pp\left(\Bigg|\sum_{i=1}^n\xi_i\Bigg|\ge x\right)\le \exp\left\{2-\frac{x}{2eF_n}\right\},
\deq
where
$$
F_n=\left(\sum_{i=1}^nb_i^2\right)^{1/2}.
$$
\end{lem}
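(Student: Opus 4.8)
The plan is to use the moment method (Markov's inequality applied to a high power) together with the Rosenthal-type inequality of Lemma \ref{lem-h-1}. Write $S=\sum_{i=1}^n\xi_i$. The decisive simplification afforded by the boundedness hypothesis is that the square function is controlled \emph{surely}: since $|\xi_i|\le b_i$ we have $\sum_{i=1}^n\xi_i^2\le\sum_{i=1}^n b_i^2=F_n^2$ almost surely, so Lemma \ref{lem-h-1} gives, for every real $p\ge2$,
$$
\ee|S|^p\le 2^p(p-1)^{p/2}\,\ee\Big(\sum_{i=1}^n\xi_i^2\Big)^{p/2}\le (2F_n)^p(p-1)^{p/2}\le(2F_n)^p p^{p/2}.
$$
Markov's inequality then produces the master bound $\pp(|S|\ge x)\le \ee|S|^p/x^p\le\big(2\sqrt p\,F_n/x\big)^p$, valid for all $p\ge2$ and all $x>0$.

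Next I would optimize over $p$. The exponent $p\mapsto p\ln\!\big(2\sqrt p\,F_n/x\big)$ is minimized at $p^\ast=x^2/(4eF_n^2)$, where $2\sqrt{p^\ast}F_n/x=e^{-1/2}$, so the master bound becomes the sub-Gaussian estimate $\pp(|S|\ge x)\le\exp\{-x^2/(8eF_n^2)\}$. This choice is admissible only when $p^\ast\ge2$; a convenient sufficient threshold is $x>4eF_n$, for which $p^\ast>4e>2$.

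It then remains to convert this into the claimed sub-exponential form uniformly in $x>0$, splitting according to the size of $x$. For $0<x\le4eF_n$ the exponent $2-x/(2eF_n)$ is nonnegative, hence $\exp\{2-x/(2eF_n)\}\ge1\ge\pp(|S|\ge x)$ and the inequality is trivial. For $x>4eF_n$ I would invoke the sub-Gaussian bound and weaken it: setting $u=x/F_n$, the elementary inequality $\tfrac{u}{2e}-\tfrac{u^2}{8e}\le\tfrac1{2e}\le2$ (the left-hand side has global maximum $1/(2e)$, attained at $u=2$) is precisely $-x^2/(8eF_n^2)\le 2-x/(2eF_n)$, whence $\exp\{-x^2/(8eF_n^2)\}\le\exp\{2-x/(2eF_n)\}$, which is the assertion.

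The point to watch, and the only genuine subtlety, is that the moment method actually delivers a \emph{sub-Gaussian} tail, which is strictly stronger than the stated sub-exponential inequality for large $x$; the content of the lemma is thus not the decay rate but the availability of one clean exponential bound for every $x>0$. Consequently the bookkeeping must be careful at two points: one must ensure the optimizing power obeys $p\ge2$, which forces the small-$x$ range to be disposed of by the trivial estimate $\pp\le1$ rather than by the $p=2$ moment bound $8F_n^2/x^2$ (which blows up as $x\to0$), and one must verify the elementary calculus inequality effecting the sub-Gaussian-to-sub-exponential weakening with the exact constants $2$ and $2e$ appearing in the statement.
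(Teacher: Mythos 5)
Your proof is correct, and it follows the same general framework as the paper (the moment bound of Lemma \ref{lem-h-1}, Markov's inequality with a well-chosen power $p\ge2$, and the trivial estimate $\pp\le1\le e^{2-x/(2eF_n)}$ on the range $0<x\le 4eF_n$), but the execution differs in one genuine way. The paper discards precision at the moment stage: it bounds $(p-1)^{p/2}\le p^p$, so that $\ee|S|^p\le(2F_n)^pp^p$, and then the \emph{linear} choice $p=x/(2eF_n)$ makes the Markov bound collapse immediately to $(2F_np/x)^p=e^{-p}=\exp\{-x/(2eF_n)\}$ for $x>4eF_n$ --- no further conversion is needed, since this is exactly the claimed exponent. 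You instead keep the sharper factor $p^{p/2}$, optimize with the \emph{quadratic} choice $p^\ast=x^2/(4eF_n^2)$, and obtain the sub-Gaussian tail $\exp\{-x^2/(8eF_n^2)\}$, which you then weaken to the stated sub-exponential form via the elementary inequality
\begin{equation*}
\frac{u}{2e}-\frac{u^2}{8e}\le\frac{1}{2e}\le 2,\qquad u=x/F_n .
\end{equation*}
Your route costs one extra calculus step but buys a strictly stronger intermediate result (and your checks --- that $p^\ast>4e>2$ on the range $x>4eF_n$, and that the small-$x$ range must be handled trivially rather than with $p=2$ --- are exactly the right points of care); the paper's route is shorter precisely because the cruder bound $p^p$ is already matched to the sub-exponential form of the conclusion, which is all that is used downstream in the proof of Theorem \ref{thm2-1}.
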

\begin{proof}
From Lemma \ref{lem-h-1}, we have
$$
\ee\Bigg|\sum_{i=1}^n\xi_i\Bigg|^p\le 2^p(p-1)^{p/2}\ee\left(\sum_{i=1}^n\xi_i^2\right)^{p/2}\le2^pp^p\left(\sum_{i=1}^nb_i^2\right)^{p/2}=:(2F_n)^pp^p.
$$
Let $p=x/2eF_n$ for any $x>4eF_n$. By using the Markov's inequality, we have
$$
\pp\left(\Bigg|\sum_{i=1}^n\xi_i\Bigg|\ge x\right)\le \exp\left\{-\frac{x}{2eF_n}\right\}.
$$
Moreover, for any $0<x<4eF_n$, we have
$$
\pp\left(\Bigg|\sum_{i=1}^n\xi_i\Bigg|\ge x\right)\le \exp\left\{2-\frac{x}{2eF_n}\right\}.
$$
Hence, for any $x>0$, (\ref{0}) holds.
\end{proof}

\begin{lem}\label{lem-z}$($\cite[(2.18)]{G-L-Z}$)$
Let $X_1, X_2, \cdots, X_n$ be a sequence of independent identically distributed random variables, and for any $1\le i,j\le n$, $h_{i,j}(u,v): \rr^2\to \rr$ be measurable and symmetric with respect to its arguments. For any $1\le i\ne j\le n$, assume that $\ee(h_{i,j}(X_i,X_j)|X_j)=0$, $\ee(h_{i,j}(X_i,X_j)|X_i)=0$ and $\ee|h_{i,j}(X_1,X_2)|^p<\infty$ for some $p\ge 2$. Then we have
$$
\ee\left|\sum_{1\le i< j \le n}h_{i,j}(X_i,X_j)\right|^p\le 4^{p}p^p\ee \left(\sum_{1\le i< j \le n}h_{i,j}^2(X_i,X_j)\right)^{p/2}.
$$
\end{lem}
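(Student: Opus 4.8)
The completely degenerate structure is the feature I would lean on throughout: the hypotheses $\ee(h_{i,j}(X_i,X_j)\,|\,X_i)=\ee(h_{i,j}(X_i,X_j)\,|\,X_j)=0$ make every conditional expectation obtained by freezing one coordinate vanish, and this is exactly what keeps the ``cross terms'' from appearing at each stage. My plan is to reduce the bound to \emph{two} successive applications of the independent-sum moment inequality in Lemma~\ref{lem-h-1}. This is the natural target because the claimed constant factors as $4^pp^p\ge 4^p(p-1)^p=\big(2^p(p-1)^{p/2}\big)^2$, i.e. each application of Lemma~\ref{lem-h-1} should contribute precisely one factor $2^p(p-1)^{p/2}$. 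To separate the two index slots I would first \emph{decouple}: introduce an independent copy $(X_i')_{1\le i\le n}$ of $(X_i)_{1\le i\le n}$ and pass from $S:=\sum_{1\le i<j\le n}h_{i,j}(X_i,X_j)$ to $T:=\sum_{1\le i<j\le n}h_{i,j}(X_i,X_j')$ via a decoupling inequality for completely degenerate $U$-statistics, so that $\ee|S|^p$ is controlled by a constant multiple of $\ee|T|^p$; the point of passing to $T$ is that its two index slots are now carried by two independent families, which can be handled one at a time.

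Next I would apply Lemma~\ref{lem-h-1} twice. Writing $T=\sum_{j}W_j$ with $W_j:=\sum_{i<j}h_{i,j}(X_i,X_j')$ and conditioning on $\FF^{X}:=\sigma(X_1,\dots,X_n)$, the $W_j$ are independent in $j$ and centered (this uses $\ee(h_{i,j}(X_i,X_j')\,|\,X_i)=0$), so Lemma~\ref{lem-h-1} gives $\ee|T|^p\le 2^p(p-1)^{p/2}\,\ee\big(\sum_j W_j^2\big)^{p/2}$. To control the surviving term I regard $(W_j)_j=\sum_i\mathbf v_i$, where $\mathbf v_i$ is the $\rr^n$-valued vector with coordinates $h_{i,j}(X_i,X_j')\mathbf 1_{\{i<j\}}$; conditioning now on $\sigma(X_1',\dots,X_n')$, these vectors are independent in $i$ and centered (by $\ee(h_{i,j}(X_i,X_j')\,|\,X_j')=0$), so the Hilbert-space-valued analogue of Lemma~\ref{lem-h-1} yields $\ee\big(\sum_j W_j^2\big)^{p/2}=\ee\|\sum_i\mathbf v_i\|^p\le 2^p(p-1)^{p/2}\,\ee\big(\sum_{i<j}h_{i,j}(X_i,X_j')^2\big)^{p/2}$. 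Multiplying the two constants produces $4^p(p-1)^p\le 4^pp^p$, exactly as required.

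The step I expect to be the main obstacle is the bookkeeping of the \emph{square function}. The computation above naturally delivers the \emph{decoupled} square function $\sum_{i<j}h_{i,j}(X_i,X_j')^2$, whereas the statement demands the non-decoupled $\sum_{i<j}h_{i,j}^2(X_i,X_j)$; since $h_{i,j}^2\ge 0$, I would close this gap by a decoupling inequality for the nonnegative kernels $h_{i,j}^2$ (equivalently, a tangent-sequence comparison), and I must then verify that the constant incurred there, together with the decoupling constant used to pass from $S$ to $T$ and the passage from the scalar to the Hilbert-space form of Lemma~\ref{lem-h-1}, are all absorbed into the single factor $4^pp^p$ rather than producing an uncontrolled ``$M$''. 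In short, the conceptual content is carried entirely by the degeneracy together with Lemma~\ref{lem-h-1}, and the delicate part is the \emph{lossless} tracking of constants through the decoupling and the recombination of the two index directions into the one square function $\sum_{i<j}h_{i,j}^2(X_i,X_j)$.
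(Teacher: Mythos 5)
You should first be aware that the paper contains no proof of this lemma: it is quoted verbatim from Gin\'e--Lata{\l}a--Zinn \cite[(2.18)]{G-L-Z}, so your attempt can only be judged against the statement itself, not against an in-paper argument. Your core computation is the right one and is essentially the standard route: after decoupling, the conditional centering you invoke is exactly what the degeneracy hypotheses provide, the two applications of Lemma \ref{lem-h-1} (scalar in $j$ given $\FF^{X}$, then Hilbert-space-valued in $i$ given $X'$) are legitimate, and the arithmetic $\bigl(2^p(p-1)^{p/2}\bigr)^2=4^p(p-1)^p\le 4^pp^p$ is correct. In particular your middle section is a complete proof of the \emph{decoupled} analogue, $\ee|T|^p\le 4^p(p-1)^p\,\ee\bigl(\sum_{i<j}h_{i,j}^2(X_i,X_j')\bigr)^{p/2}$, modulo one caveat: the ``Hilbert-space-valued analogue of Lemma \ref{lem-h-1}'' is not a formal consequence of the scalar statement; it requires Khinchin's inequality with constant $\sqrt{p-1}$ for Hilbert-space coefficients, which must be invoked or proved separately.

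The genuine gap is precisely the step you flagged, and it is not a matter of careful bookkeeping --- it is unclosable as designed. Every moment decoupling inequality for degenerate order-two kernels (de la Pe\~na--Montgomery-Smith tail decoupling, the elementary Bernoulli-selector-plus-Jensen argument, or tangent-sequence comparisons) costs a factor $C^p$ with a universal constant $C>1$, equivalently a constant bounded away from $1$ at the level of $L_p$ norms uniformly in $p$; the same holds for the positive-kernel comparison needed to pass from $\sum_{i<j}h_{i,j}^2(X_i,X_j')$ back to $\sum_{i<j}h_{i,j}^2(X_i,X_j)$. But the total slack your scheme leaves is $4^pp^p/\bigl(4^p(p-1)^p\bigr)=(1-1/p)^{-p}\le 4$, a bounded factor, not an exponential one; in norm terms you could only afford decoupling constants of size $p/(p-1)\to 1$. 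So for large $p$ any chain containing even one decoupling step yields a constant strictly larger than $4^pp^p$: the absorption you hope for is impossible. What your plan actually proves is the lemma with $4^pp^p$ replaced by $C^pp^p$ for a larger universal $C$ --- which would in fact suffice for everything this paper uses the lemma for, since only the form $C^pp^p$ enters Lemma \ref{lem-e} and the subsequent exponential bounds, but it does not prove the statement as written. To reach the stated constant one must never decouple: degeneracy makes $Y_j=\sum_{i<j}h_{i,j}(X_i,X_j)$ a martingale-difference sequence in $j$, and the vectors $\bigl(h_{i,j}(X_i,X_j)\mathbf{1}_{\{i<j\}}\bigr)_{j}$, summed over $i$, a Hilbert-valued martingale-difference sequence for the reverse filtration, so the symmetrization-plus-Khinchin proof behind Lemma \ref{lem-h-1} has to be run conditionally inside the coupled sum itself (via tangent sequences), rather than applied after replacing the coupled sum by a decoupled one.
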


\begin{lem}\label{lem-e}
Under the conditions in Lemma \ref{lem-z}, if there exist positive constants $\{a_{i,j}, 1\le i,j\le n\}$, such that
$|h_{i,j}(u,v)|\le a_{i,j}$ for all $1\le i,j\le n$ and all $u,v \in\rr$, then for any $x>0$, we have
\beq\label{00}
\pp\left(\left|\sum_{1\le i< j \le n}h_{i,j}(X_i,X_j)\right|>x\right)\le \exp\left\{2-\frac{x}{4eD_n}\right\},
\deq
where
$$
D_n= \left(\sum_{1\le i< j \le n}a_{i,j}^2\right)^{1/2}.
$$
\end{lem}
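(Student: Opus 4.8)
The plan is to adapt the proof of Lemma \ref{lem-h-2} almost verbatim, replacing the Rosenthal-type bound of Lemma \ref{lem-h-1} by the degenerate $U$-statistic moment bound of Lemma \ref{lem-z}, and then optimizing a Markov inequality over the moment order. Write $S=\sum_{1\le i<j\le n}h_{i,j}(X_i,X_j)$. Since $|h_{i,j}(u,v)|\le a_{i,j}$ is bounded, every moment $\ee|h_{i,j}(X_1,X_2)|^p$ is finite, so the hypotheses of Lemma \ref{lem-z} hold for all $p\ge2$, and the degeneracy relations $\ee(h_{i,j}(X_i,X_j)\mid X_i)=\ee(h_{i,j}(X_i,X_j)\mid X_j)=0$ are inherited from that statement.

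First I would record the moment estimate. By Lemma \ref{lem-z}, for every $p\ge2$,
$$
\ee|S|^p\le 4^p p^p\,\ee\Big(\sum_{1\le i<j\le n}h_{i,j}^2(X_i,X_j)\Big)^{p/2}.
$$
The uniform bound $h_{i,j}^2\le a_{i,j}^2$ then lets me replace the random quadratic sum by the deterministic quantity $\sum_{1\le i<j\le n}a_{i,j}^2=D_n^2$, which collapses the estimate to
$$
\ee|S|^p\le 4^p p^p D_n^p=(4D_n)^p p^p.
$$
This is exactly the analogue of the bound $\ee|\sum\xi_i|^p\le(2F_n)^p p^p$ obtained inside the proof of Lemma \ref{lem-h-2}, with the constant $2$ upgraded to $4$.

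Next I would apply Markov's inequality and optimize over $p$. For $x>0$ and $p\ge2$,
$$
\pp(|S|>x)\le\frac{\ee|S|^p}{x^p}\le\Big(\frac{4D_n p}{x}\Big)^p.
$$
Choosing $p=x/(4eD_n)$ makes the bracket equal to $1/e$, giving $\pp(|S|>x)\le e^{-p}=\exp\{-x/(4eD_n)\}$. This choice is admissible precisely when $p\ge2$, i.e. when $x\ge 8eD_n$. For the complementary range $0<x<8eD_n$ one has $x/(4eD_n)<2$, so the right-hand side of (\ref{00}), namely $\exp\{2-x/(4eD_n)\}$, already exceeds $1$ and the inequality holds trivially. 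Combining the two ranges, and using $\exp\{-x/(4eD_n)\}\le\exp\{2-x/(4eD_n)\}$ on the large-$x$ range, yields (\ref{00}) for all $x>0$.

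Because the whole argument is a structural copy of Lemma \ref{lem-h-2}, I do not anticipate a genuine obstacle. The only points requiring attention are the admissibility constraint $p\ge2$ built into Lemma \ref{lem-z} (which forces the crossover at $x=8eD_n$, rather than at $4eF_n$ as in the independent case) and the bookkeeping of the constant $4$ in place of $2$; both are absorbed cleanly by the additive $2$ in the exponent, which disposes of the small-$x$ regime at no cost.
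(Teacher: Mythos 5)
Your proof is correct and follows essentially the same route as the paper's: both use Lemma \ref{lem-z} together with the uniform bound $|h_{i,j}|\le a_{i,j}$ to get $\ee|S|^p\le(4D_n)^pp^p$, then apply Markov's inequality with $p=x/(4eD_n)$ for $x\ge 8eD_n$ and dispose of the range $0<x<8eD_n$ trivially via the additive $2$ in the exponent. Your write-up is in fact slightly more explicit than the paper's about why the crossover sits at $8eD_n$ (the constraint $p\ge2$) and why the two ranges combine.
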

\begin{proof} From Lemma \ref{lem-z}, we have
$$
\ee\left|\sum_{1\le i< j \le n}h_{i,j}(X_i,X_j)\right|^p\le 4^{p}p^p \left(\sum_{1\le i< j \le n}a_{i,j}^2\right)^{p/2}=:(4D_n)^{p}p^p.
$$
Let $p=x/(4eD_n)$ for any $x>8eD_n$. Then, by the Markov's inequality, we have
$$
\pp\left(\left|\sum_{1\le i< j \le n}h_{i,j}(X_i,X_j)\right|>x\right)\le \exp\left\{-\frac{x}{4eD_n}\right\}.
$$
Moreover, for any $0<x<8eD_n$, we have
$$
\pp\left(\left|\sum_{1\le i< j \le n}h_{i,j}(X_i,X_j)\right|>x\right)\le \exp\left\{2-\frac{x}{4eD_n}\right\}.
$$
Hence, for any $x>0$, (\ref{00}) holds.
\end{proof}

\begin{lem}\label{lem2-3-2} $($\cite[Lemma 4]{L-Q}$)$
Define $o_{ij}=\sum_{l=1}^nA_{il}A_{jl}$ for $1\le i,j\le n$. Then
$$
\sum_{1\le i,j\le n}o_{ij}^2\le\max_{1\le j\le n}k_j\sum_{i=1}^nk_i^2.
$$
\end{lem}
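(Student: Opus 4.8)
The plan is to reduce the inequality to the combination of one summation identity and one pointwise bound, both of which are immediate from the definition $o_{ij}=\sum_{l=1}^nA_{il}A_{jl}$ together with the facts that $A$ is symmetric with entries in $\{0,1\}$ and $k_i=\sum_{l=1}^nA_{il}$. First I would compute the total mass of the array $(o_{ij})$ by exchanging the order of summation, which gives
\[
\sum_{1\le i,j\le n}o_{ij}=\sum_{l=1}^n\left(\sum_{i=1}^nA_{il}\right)\left(\sum_{j=1}^nA_{jl}\right)=\sum_{l=1}^nk_l^2,
\]
so that the right-hand side $\max_{1\le j\le n}k_j\sum_{i=1}^nk_i^2$ is exactly $\max_{1\le j\le n}k_j$ times $\sum_{1\le i,j\le n}o_{ij}$.

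Next I would establish the uniform bound $o_{ij}\le\max_{1\le l\le n}k_l$ for every pair $(i,j)$. Since $A_{jl}\le1$ for all $l$, we have
\[
o_{ij}=\sum_{l=1}^nA_{il}A_{jl}\le\sum_{l=1}^nA_{il}=k_i\le\max_{1\le l\le n}k_l;
\]
combinatorially $o_{ij}$ counts the common neighbours of $v_i$ and $v_j$, which cannot exceed either degree. With these two ingredients in hand, writing $o_{ij}^2=o_{ij}\cdot o_{ij}$ and bounding one factor by $\max_{1\le l\le n}k_l$ while keeping the other, then invoking the identity above, yields
\[
\sum_{1\le i,j\le n}o_{ij}^2\le\max_{1\le l\le n}k_l\sum_{1\le i,j\le n}o_{ij}=\max_{1\le l\le n}k_l\sum_{i=1}^nk_i^2,
\]
which is the claimed inequality.

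There is no serious analytic obstacle here: the argument is purely algebraic, and the nonnegativity of every $o_{ij}$ makes the factor-by-factor bound harmless. The only genuine content is the initial reformulation---recognising that $\sum_{1\le i,j\le n}o_{ij}=\sum_{l=1}^nk_l^2$ and that each entry is dominated by the maximal degree---after which the estimate follows from a single application of each fact. I would only double-check the diagonal contribution $o_{ii}=\sum_{l=1}^nA_{il}^2=k_i$ (using $A_{il}\in\{0,1\}$), but since these terms are likewise bounded by $\max_{1\le l\le n}k_l$ they are absorbed without any special treatment.
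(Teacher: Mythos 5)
Your proof is correct, and it is the natural argument: the identity $\sum_{1\le i,j\le n}o_{ij}=\sum_{l=1}^n k_l^2$ obtained by swapping the order of summation, the pointwise bound $0\le o_{ij}\le k_i\le \max_{1\le l\le n}k_l$ from $A_{jl}\in\{0,1\}$, and then $o_{ij}^2\le \bigl(\max_{1\le l\le n}k_l\bigr)o_{ij}$ summed over all pairs. Note that the paper itself gives no proof of this lemma at all --- it is quoted verbatim as Lemma 4 of Li and Qi \cite{L-Q} --- so there is no in-paper argument to compare against; your write-up simply supplies the short elementary proof that the paper delegates to its reference, and every step (including the symmetry of $A$ used in $\sum_i A_{il}=k_l$, and the diagonal terms $o_{ii}=k_i$) checks out.
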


\section{Proofs of main results}
\begin{proof}[\bf Proof of Theorem \ref{thm2-1}]
Since
$$
\sum_{i=1}^nB_{ij}=\sum_{j=1}^nB_{ij}=0,
$$
we can deduce that
$$
\sum_{1\leq i\neq j\leq n}B_{ij}=\sum_{1\leq i,j\leq n}B_{ij}-\sum_{i=1}^nB_{ii}=-\sum_{i=1}^nB_{ii}
$$
and
\begin{align}
Q_n&=\nonumber \frac{1}{2m}\sum_{i=1}^nB_{ii}+\frac{1}{2m}\sum_{1\leq i\neq j\leq n}B_{ij}\bar{h}(c_i,c_j)\\
&\ \ \ \ \nonumber +\frac{1}{2m}\sum_{1\leq i\neq j\leq n}B_{ij}(p_{c_{i}}+p_{c_{j}})-\frac{p_{(2)}}{2m}\sum_{1\leq i\neq j\leq n}B_{ij}\\
&=\nonumber\frac{1+p_{(2)}}{2m}\sum_{i=1}^nB_{ii}+\frac{1}{m}\sum_{1\leq i<j\leq n}B_{ij}\bar{h}(c_i,c_j)-\frac{1}{m}\sum_{i=1}^nB_{ii}p_{c_{i}}\\
&=\frac{1-p_{(2)}}{2m}\sum_{i=1}^nB_{ii}+\frac{1}{m}\sum_{1\leq i<j\leq n}B_{ij}\bar{h}(c_i,c_j)-\frac{1}{m}\sum_{i=1}^nB_{ii}(p_{c_{i}}-p_{(2)}). ~\label{cc}
\end{align}
Combining (\ref{2-3}), (\ref{q1}) and the above equation, it holds that
\begin{align}
\frac{Q_n-\mu_n}{\delta_n}&=\nonumber\frac{1}{m\delta_n}\sum_{1\leq i<j\leq n}B_{ij}\bar{h}(c_i,c_j)-\frac{1}{m\delta_n}\sum_{i=1}^nB_{ii}(p_{c_{i}}-p_{(2)})\\
&=\nonumber\frac{1}{m\delta_n}\sum_{1\leq i<j\leq n}A_{ij}\overline{h}(c_i,c_j)-\frac{1}{2m\delta_n}\sum_{1\leq i<j\leq n}\frac{k_ik_j}{m}\overline{h}(c_i,c_j)\\
&\ \ \ +\frac{1}{2m\delta_n}\sum_{i=1}^n\frac{k_i^2}{m}(p_{c_{i}}-p_{(2)}). ~\label{c-1}
\end{align}
Denote
\beq\label{b-1}
T_n=\frac{1}{m\delta_n}\sum_{j=1}^n\sum_{i=1}^{j-1}A_{ij}\overline{h}(c_i,c_j)=:\frac{1}{m\delta_n}\sum_{j=1}^nz_{nj}.
\deq
Let $\FF_j=\sigma(c_1,c_2,\ldots,c_j)$ denote the $\sigma$-algebra generated by $\{c_1,c_2,\ldots,c_j\}$ for $1\leq j\leq n$. Since $\ee(\bar{h}(c_i,c_j)|\FF_{j-1})=\ee(\bar{h}(c_i,c_j)|c_i)=0$ for any $1\leq i<j\leq n$, it is easy to see that
$$
\ee\left(z_{nj}|\FF_{j-1}\right)=0 \ \ \ \text{for}\ \ \ 2\leq j\leq n.
$$
Therefore, for each $n\geq2$, $\{z_{nj},\ j=2,\ldots,n\}$ forms a martingale difference with respect to $\{\FF_j\}$. Then $(T_i,\FF_i)_{i=0,\ldots,n}$ is a martingale and
$$
\langle T\rangle_n=\frac{1}{m^2\delta_n^2}\sum_{j=1}^n\ee\left(z_{nj}^2|\FF_{j-1}\right).
$$

Next, we will verify that $T_n$ satisfies the conditions (i) and (ii) in Lemma \ref{lem2-1}. By using (\ref{q1}) and the fact $|\overline{h}(c_i,c_j)|\le2$, then for $1\le j\le n$, we can derive
$$
\frac{1}{m\delta_n}|z_{nj}|=\frac{1}{m\delta_n}\bigg|\sum_{i=1}^{j-1}A_{ij}\overline{h}(c_i,c_j)\bigg|\le \frac{2k_j}{m\delta_n}\le \frac{2\max_{1\le j\le n}k_j}{\sqrt{m(p_{(2)}+p_{(2)}^2-2p_{(3)})}}=\varepsilon_n.
$$
Hence, from the condition (\ref{tt}), the condition (i) holds.
Since $A_{ij}\in\{0,1\}$, we have $A_{ij}^2=A_{ij}$. It is enough to get that
\begin{align*}
\frac{1}{m^2\delta_n^2}\sum_{j=1}^n\ee\left(z_{nj}^2|\FF_{j-1}\right)
=& \frac{1}{m^2\delta_n^2}\sum_{j=1}^n
\ee\left(\left(\sum_{i=1}^{j-1}A_{ij}\overline{h}(c_i,c_j)\right)^2\Big|\FF_{j-1}\right)\\
=& \frac{1}{m^2\delta_n^2}\sum_{i=1}^{n-1}\sum_{j=i+1}^{n}
A_{ij}\ee\left(\left(\overline{h}(c_i,c_j)\right)^2|\FF_{j-1}\right)\\
&\ \ + \frac{2}{m^2\delta_n^2}\sum_{1\le i<l\le n-1}\sum_{j=l+1}^nA_{ij}A_{lj}\ee\left(\overline{h}(c_i,c_j)\overline{h}(c_{l},c_j)|\FF_{j-1}\right).
\end{align*}
For every $1\le i< j\le n$, define
\beq\label{l-7}
U_{ni}=\sum_{j=i+1}^{n}A_{ij}\ee\left[\left(\overline{h}(c_i,c_j)\right)^2|\FF_{j-1}\right],
\deq
then $\{U_{ni}, 1\le i\le n-1, n\ge 2\}$ is an array of independent random variables with
$$
|U_{ni}|\le 4\sum_{j=i+1}^{n}A_{ij},\ \ 1\le i\le n-1
$$
and
\begin{align}
\frac{1}{m^2\delta_n^2}\sum_{i=1}^{n-1} \ee U_{ni}
\nonumber=& \ee\left[\frac{1}{m^2\delta_n^2}\sum_{i=1}^{n-1}\sum_{j=i+1}^{n}A_{ij}
\ee\left(\left(\overline{h}(c_i,c_j)\right)^2|\FF_{j-1}\right)\right]\\
 \nonumber=& \frac{1}{m^2\delta_n^2} \sum_{i=1}^{n-1}\sum_{j=i+1}^{n}A_{ij}\ee\left[\ee\left(\left(\overline{h}(c_i,c_j)\right)^2|\FF_{j-1}\right)\right]\\
\nonumber =&\frac{1}{m^2\delta_n^2}\sum_{i=1}^{n-1}\sum_{j=i+1}^{n}A_{ij}
 \ee\left[\ee\left(\left(\delta_{c_i,c_j}-p_{c_{i}}-p_{c_{j}}+p_{(2)}\right)^2|\FF_{j-1}\right)\right]\\
\nonumber =&\frac{1}{m^2\delta_n^2}\sum_{i=1}^{n-1}\sum_{j=i+1}^{n}A_{ij}
 \ee\left[\ee\Big(\left(\delta_{c_i,c_j}-p_{c_{i}}\right)^2+\left(p_{(2)}-p_{c_{j}}\right)^2 \right.\\
\nonumber &\ \ \ \left.
 +2\left(\delta_{c_i,c_j}-p_{c_{i}}\right)\left(p_{(2)}-p_{c_{j}}\Big)|\FF_{j-1}\right)\right]\\
\nonumber =&\frac{1}{m^2\delta_n^2}\sum_{i=1}^{n-1}\sum_{j=i+1}^{n}A_{ij}
 \ee\Big[\ee\Big(\delta_{c_i,c_j}^2+p_{c_{i}}^2-2\delta_{c_i,c_j}p_{c_{i}}+p_{(2)}^2+p_{c_{j}}^2-2p_{(2)}p_{c_{j}} \\
\nonumber &\ \ \
 +2\delta_{c_i,c_j}p_{(2)}-2\delta_{c_i,c_j}p_{c_{j}}-2p_{(2)}p_{c_{i}}+2p_{c_{i}}p_{c_{j}}|\FF_{j-1}\Big)\Big]\\
\nonumber=&\frac{1}{m^2\delta_n^2}\sum_{i=1}^{n-1}\sum_{j=i+1}^{n}A_{ij}\ee\left[ p_{c_{i}}-3p_{c_{i}}^2+2p_{(2)}p_{c_i}-p_{(2)}^2+p_{(3)}\right]\\
\nonumber=&\frac{1}{2m^2\delta_n^2}\left(p_{(2)}+p_{(2)}^2-2p_{(3)}\right)\sum_{1\le i, j\le n}A_{ij}\\
 =&\frac{2m}{2m^2\delta_n^2}\left(p_{(2)}+p_{(2)}^2-2p_{(3)}\right)=1.~\label{l-5}
\end{align}
For $1\le i<l\le n$, denote
\beq\label{l-8}
\overline H_{i,l}(c_i,c_l):=\sum_{j=l+1}^nA_{ij}A_{lj}\ee\left(\overline{h}(c_i,c_j)\overline{h}(c_{l},c_j)|\FF_{j-1}\right),
\deq
then, by using the fact $|\overline{h}(c_i,c_j)|\le2$, we can derive
$$
\left|\overline H_{i,l}(c_i,c_l)\right|\le 4\sum_{j=l+1}^nA_{ij}A_{lj}.
$$
Furthermore, for $1\le i<l<j\le n$, since
\begin{align}
&\ \ \ \ \nonumber\ee\left[\overline{h}(c_i,c_j)\overline{h}(c_{l},c_j)\right]\\
&=\nonumber\ee\left[\ee\left[\left(\delta_{c_i,c_j}-p_{c_i}-p_{c_{j}}+p_{(2)}\right)
\left(\delta_{c_{l},c_j}-p_{c_{l}}-p_{c_{j}}+p_{(2)}\right)|\FF_{j-1}\right]\right]\\
&=\nonumber\ee\left[\ee\Big[\delta_{c_i,c_j}\delta_{c_{l},c_j}-\delta_{c_i,c_j}p_{c_{l}}
-\delta_{c_i,c_j}p_{c_{j}}+\delta_{c_i,c_j}p_{(2)}-\delta_{c_{l},c_j}p_{c_i}+p_{c_i}p_{c_{l}}+p_{c_i}p_{c_{j}} -p_{c_i}p_{(2)} \right.\\
 &\ \ \ \ \ \ \ \ \nonumber\left.  -\delta_{c_{l},c_j}p_{c_{j}}+p_{c_{j}}p_{c_{l}}+p_{c_{j}}^2-p_{c_{j}}p_{(2)}
 +\delta_{c_{l},c_j}p_{(2)}-p_{(2)}p_{c_{l}}-p_{(2)}p_{c_j}+p_{(2)}^2
 |\FF_{j-1}\Big]\right]\\
&=\nonumber\ee\Big[\ee\Big(\delta_{c_i,c_{l}}\frac{p_{c_i}+p_{c_{l}}}{2} -p_{c_i}p_{c_{l}}-p_{c_i}^2+p_{c_i}p_{(2)}-p_{c_i}p_{c_{l}}+p_{c_i}p_{c_{l}}+p_{c_i}p_{(2)}-p_{c_i}p_{(2)} \\
 & \ \ \ \ \ \ \ \ \nonumber \ -p_{c_{l}}^2+p_{c_{l}}p_{(2)}+p_{(3)}-p_{(2)}^2+p_{c_{l}}p_{(2)}
 -p_{c_{l}}p_{(2)}-p_{(2)}^2+p_{(2)}^2|c_i\Big)\Big]\\
&=\ee\left[\ee\left(
\delta_{c_i,c_{l}}\frac{p_{c_i}+p_{c_{l}}}{2}-p_{c_i}p_{c_{l}}-p_{c_i}^2
+p_{c_i}p_{(2)}-p_{c_{l}}^2+p_{c_{l}}p_{(2)} +p_{(3)}-p_{(2)}^2|c_i\right)\right]=0,~\label{l-6}
\end{align}
then we have
\begin{align*}
&\ \ \ \ \frac{2}{m^2\delta_n^2}\sum_{1\le i<l\le n-1}\ee\overline H_{i,l}(c_i,c_l)\\
&=\frac{2}{m^2\delta_n^2}\sum_{1\le i<l\le n-1}\ee\left[\sum_{j=l+1}^nA_{ij}A_{lj}\ee\left(\overline{h}(c_i,c_j)\overline{h}(c_{l},c_j)|\FF_{j-1}\right)\right]\\
&=\frac{2}{m^2\delta_n^2}\sum_{1\le i<l\le n-1}
\sum_{j=l+1}^nA_{ij}A_{lj}\ee\left[\overline{h}(c_i,c_j)\overline{h}(c_{l},c_j)\right]=0.
\end{align*}
Therefore, it follows that
\begin{align}
&\ \ \ \ \nonumber\pp\left(|\langle T\rangle_n-1|\ge x\right)\\
&= \nonumber\pp\left(\left|\frac{1}{m^2\delta_n^2}\sum_{j=1}^n\ee(z_{nj}^2|\FF_{j-1})-1\right|\ge x\right)\\
&\le \nonumber\pp\left(\left|\frac{1}{m^2\delta_n^2}\sum_{i=1}^{n-1}\sum_{j=i+1}^{n}
A_{ij}\ee\left(\left(\overline{h}(c_i,c_j)\right)^2|\FF_{j-1}\right)-1\right|\ge \frac{x}{2}\right)\\
&\ \ \ \ \nonumber +\pp\left(\left|\frac{2}{m^2\delta_n^2}\sum_{1\le i<l\le n-1}\sum_{j=l+1}^nA_{ij}A_{lj}\ee\left(\overline{h}(c_i,c_j)\overline{h}(c_{l},c_j)|\FF_{j-1}\right)\right|\ge \frac{x}{2}\right)\\
&=\pp\left(\left|\frac{1}{m^2\delta_n^2}\sum_{i=1}^{n-1}(U_{ni}-\ee U_{ni})\right|\ge \frac{x}{2}\right)
 +\pp\left(\left|\frac{2}{m^2\delta_n^2}\sum_{1\le i<l\le n-1}\overline H_{i,l}(c_i,c_l)\right|\ge \frac{x}{2}\right).~\label{p-1}
\end{align}
Firstly, consider the first part. By using the Lemma \ref{lem-h-2}, for any $x>0$, we deduce that
\begin{align}
&\ \ \ \ \nonumber\pp\left(\left|\frac{1}{m^2\delta_n^2}\sum_{i=1}^{n-1}(U_{ni}-\ee U_{ni})\right|\ge \frac{x}{2}\right)\\
&\le \nonumber \exp\left(2-\frac{xm^2\delta_n^2}{32e\left(\sum_{i=1}^{n-1}\left(\sum_{j=i+1}^{n}A_{ij}\right)^2\right)^{1/2}}\right)\\
&\le \nonumber \exp\left(2-\frac{xm(p_{(2)}+p_{(2)}^2-2p_{(3)})}{32e\left(\sum_{i=1}^{n-1}\left(\sum_{j=i+1}^{n}A_{ij}\right)^2\right)^{1/2}}\right)\\
&\le \exp\left(2-\frac{x(p_{(2)}+p_{(2)}^2-2p_{(3)})\sqrt{m}}{32e\sqrt{\max_{1\le i\le n}k_i}}\right).~\label{p-2}
\end{align}
Next, combining Lemma \ref{lem-e} and \ref{lem2-3-2}, for any $x>0$, we have
\begin{align}
&\ \ \ \ \nonumber\pp\left(\left|\frac{2}{m^2\delta_n^2}\sum_{1\le i<l\le n-1}\overline H_{i,l}(c_i,c_l)\right|\ge \frac{x}{2}\right)\\
&\le \nonumber\exp\left(2-\frac{xm^2\delta_n^2}{64e\left(\sum_{1\le i<l\le n-1}\left(\sum_{j=l+1}^nA_{ij}A_{lj}\right)^2\right)^{1/2}}\right)\\
&\le \nonumber\exp\left(2-\frac{xm(p_{(2)}+p_{(2)}^2-2p_{(3)})}{64e\left(\sum_{1\le i<l\le n-1}\left(\sum_{j=l+1}^nA_{ij}A_{lj}\right)^2\right)^{1/2}}\right)\\
&\le \exp\left(2-\frac{x(p_{(2)}+p_{(2)}^2-2p_{(3)})\sqrt{m}}{64e\max_{1\le i\le n}k_i}\right).~\label{p-3}
\end{align}
From (\ref{p-1}), (\ref{p-2}) and (\ref{p-3}), we have
$$
\pp\left(|\langle T\rangle_n-1|\ge x\right)\le M\exp\left(-\frac{x(p_{(2)}+p_{(2)}^2-2p_{(3)})\sqrt{m}}{64e\max_{1\le i\le n}k_i}\right).
$$
Let
$$
\eta_n^2=64e\max_{1\le i\le n}k_i/(\sqrt{m}(p_{(2)}+p_{(2)}^2-2p_{(3)})),
$$
then, for all $x>0$, we have
$$
\pp\left(|\langle T\rangle_n-1|\ge x\right)\le M \exp\left\{-x\eta_n^{-2}\right\}.
$$
Hence, from Lemma \ref{lem2-1}, for all $0\le x=o\left(\min\{\varepsilon_n^{-1},\eta_n^{-1}\}\right)=o(\eta_n^{-1})$, we have
\beq\label{dd}
\Bigg|\ln\frac{\pp(T_n>x)}{1-\Phi(x)}\Bigg|\le M\Big(x^3(\varepsilon_n+\eta_n)+(1+x)\left(\eta_n|\ln\eta_n|+\varepsilon_n|\ln\varepsilon_n|\right)\Big).
\deq
By using (\ref{c-1}), it holds that
\begin{align}
&\ \ \ \ \nonumber \pp\left(\frac{Q_n-\mu_n}{\delta_n}>x\right)\\
&=\nonumber\pp\left(\frac{1}{m\delta_n}\left(\sum_{1\le i<j\le n}A_{ij}\bar{h}(c_i,c_j)-\sum_{1\le i<j\le n}\frac{k_ik_j}{2m}\bar{h}(c_i,c_j)+\sum_{i=1}^n\frac{k_i^2}{2m}(p_{c_{i}}-p_{(2)})\right)>x\right)\\
&\le\nonumber\pp\Big(T_n>x(1-\gamma_n|\ln\gamma_n|)\Big)+\pp\left(-\frac{1}{m\delta_n}\sum_{1\le i<j\le n}\frac{k_ik_j}{2m}\bar{h}(c_i,c_j)>\frac{x}{2}\gamma_n|\ln\gamma_n|\right)\\
&\ \ \ +\pp\left(\frac{1}{m\delta_n}\sum_{i=1}^n\frac{k_i^2}{2m}(p_{c_{i}}-p_{(2)})>\frac{x}{2}\gamma_n|\ln\gamma_n|\right) ~\label{c-1-1}
\end{align}
and

$$
\aligned
&\ \ \ \ \pp\left(\frac{Q_n-\mu_n}{\delta_n}>x\right)\\
&=\pp\left(\frac{1}{m\delta_n}\left(\sum_{1\le i<j\le n}A_{ij}\bar{h}(c_i,c_j)-\sum_{1\le i<j\le n}\frac{k_ik_j}{2m}\bar{h}(c_i,c_j)+\sum_{i=1}^n\frac{k_i^2}{2m}(p_{c_{i}}-p_{(2)})\right)>x\right)\\
&\ge\pp\Big(T_n>x(1+\gamma_n|\ln\gamma_n|)\Big)-\pp\left(\frac{1}{m\delta_n}\sum_{1\le i<j\le n}\frac{k_ik_j}{2m}\bar{h}(c_i,c_j)>\frac{x}{2}\gamma_n|\ln\gamma_n|\right)\\
&\ \ \ -\pp\left(-\frac{1}{m\delta_n}\sum_{i=1}^n\frac{k_i^2}{2m}(p_{c_{i}}-p_{(2)})>\frac{x}{2}\gamma_n|\ln\gamma_n|\right).
\endaligned
$$
From Lemma \ref{lem-e} and
\beq\label{oo}
\gamma_n=\frac{4e\max_{1\le i\le n}k_i}{\sqrt{m(p_{(2)}+p_{(2)}^2-2p_{(3)})}},
\deq
we deduce that for any $x>0$,
\begin{align}
&\ \ \ \ \nonumber\pp\left(-\frac{1}{m\delta_n}\sum_{1\le i<j\le
n}\frac{k_ik_j}{2m}\bar{h}(c_i,c_j)>\frac{x}{2}\gamma_n|\ln\gamma_n|\right)\\
&\le\nonumber \pp\left(\bigg|\frac{1}{m\delta_n}\sum_{1\le i<j\le
n}\frac{k_ik_j}{2m}\bar{h}(c_i,c_j)\bigg|>\frac{x}{2}\gamma_n|\ln\gamma_n|\right)\\
&\le \nonumber\exp\left\{2-\frac{m\delta_n\gamma_n|\ln\gamma_n|x}{4e\left(\sum_{1\le i<j\le n}\frac{k_i^2k_j^2}{m^2}\right)^{1/2}}\right\}\\
&\le \nonumber\exp\left\{2-\frac{m\delta_n\gamma_n|\ln\gamma_n|x}{4e\max_{1\le i\le n}k_i}\right\}\\
&= \exp\left\{2-|\ln\gamma_n|x\right\}. ~\label{g-1}
\end{align}
Moreover, for any $x>0$, by using Lemma \ref{lem-h-2} and (\ref{oo}), we have
\begin{align}
&\ \ \ \ \nonumber\pp\left(\frac{1}{m\delta_n}\sum_{i=1}^n
\frac{k_i^2}{2m}(p_{c_{i}}-p_{(2)})>\frac{x}{2}\gamma_n|\ln\gamma_n|\right)\\
&\le \nonumber\pp\left(\bigg|\frac{1}{m\delta_n}\sum_{i=1}^n
\frac{k_i^2}{2m}(p_{c_{i}}-p_{(2)})\bigg|>\frac{x}{2}\gamma_n|\ln\gamma_n|\right)\\
&\le \nonumber \exp\left\{2-\frac{m^2\delta_n\gamma_n|\ln\gamma_n|x}{2e\left(\sum_{i=1}^nk_i^4\right)^{1/2}}\right\}\\
&\le \nonumber \exp\left\{2-\frac{m^{3/2}\delta_n\gamma_n|\ln\gamma_n|x}{2e\max_{1\le i\le n}k_i^{3/2}}\right\}\\
&\le \nonumber \exp\left\{2-\frac{2m^{1/2}|\ln\gamma_n|x}{\max_{1\le i\le n}k_i^{1/2}}\right\}\\
&\le \exp\left\{2-|\ln\gamma_n|x\right\}. ~\label{g-2}
\end{align}
Notice that for all $x\ge0$ and $|\tau|\le \frac{1}{2}$,
\beq\label{d-1}
\frac{1-\Phi(x+\tau)}{1-\Phi(x)}=\exp\left\{\theta\sqrt{2\pi}(1+x)|\tau|\right\}
\deq
and
\beq\label{d-2}
\frac{1}{\sqrt{2\pi}(1+x)}e^{-x^2/2}\le 1-\Phi(x)\le \frac{1}{\sqrt{\pi}(1+x)}e^{-x^2/2},
\deq
where $|\theta|\le1$. From (\ref{dd})-(\ref{d-2}), we deduce that for all $1\le x=o\left(\eta_n^{-1}\right)$,
\begin{align}
&\ \ \ \ \nonumber\frac{\pp\left(\frac{Q_n-\mu_n}{\delta_n}>x\right)}{1-\Phi(x)}\\
&\le \nonumber\frac{\pp\Big(T_n>x(1-\gamma_n|\ln\gamma_n|)\Big)}{1-\Phi(x)}
+\frac{\pp\left(-\frac{1}{m\delta_n}\sum_{1\le i<j\le n}\frac{k_ik_j}{2m}\bar{h}(c_i,c_j)>\frac{x}{2}\gamma_n|\ln\gamma_n|\right)}{1-\Phi(x)}\\
&\ \ \ \ \nonumber +\frac{\pp\left(\frac{1}{m\delta_n}\sum_{i=1}^n\frac{k_i^2}{2m}
(p_{c_{i}}-p_{(2)})>\frac{x}{2}\gamma_n|\ln\gamma_n|\right)}{1-\Phi(x)}\\
&\le \nonumber\frac{\pp\Big(T_n>x(1-\gamma_n|\ln\gamma_n|)\Big)}{1-\Phi\Big(x(1-\gamma_n|\ln\gamma_n|)\Big)}\cdot
\frac{1-\Phi\Big(x(1-\gamma_n|\ln\gamma_n|)\Big)}{1-\Phi(x)}\\
&\ \ \ \ \nonumber+\frac{\pp\left(-\frac{1}{m\delta_n}\sum_{1\le i<j\le n}\frac{k_ik_j}{2m}\bar{h}(c_i,c_j)>\frac{x}{2}\gamma_n|\ln\gamma_n|\right)}{1-\Phi(x)}\\
&\ \ \ \ \nonumber +\frac{\pp\left(\frac{1}{m\delta_n}\sum_{i=1}^n\frac{k_i^2}{2m}
(p_{c_{i}}-p_{(2)})>\frac{x}{2}\gamma_n|\ln\gamma_n|\right)}{1-\Phi(x)}\\
&\le \nonumber\exp\Big\{M\Big(x^3(\varepsilon_n+\eta_n)
+(1+x)(\varepsilon_n|\ln\varepsilon_n|+\eta_n|\ln\eta_n|)\Big)\Big\}
\times\frac{1-\Phi\Big(x(1-\gamma_n|\ln\gamma_n|)\Big)}{1-\Phi(x)}\\
& \ \ \ \ \nonumber+\frac{2e^2}{1-\Phi(x)}\exp\left\{-|\ln\gamma_n|x\right\}\\
&\le \nonumber\exp\Big\{M\Big(x^3(\varepsilon_n+\eta_n)
+(1+x)(\varepsilon_n|\ln\varepsilon_n|+\eta_n|\ln\eta_n|)\Big)\Big\}\\
& \ \ \ \ \nonumber\times\exp\left\{\theta\sqrt{2\pi}(1+x)x\gamma_n|\ln\gamma_n|\right\}
+M(1+x)\exp\{x^2/2\}\exp\left\{-|\ln\gamma_n|x\right\}\\
&\le \nonumber\nonumber\exp\Big\{M\Big(x^3(\varepsilon_n+\eta_n)
+(1+x)(\varepsilon_n|\ln\varepsilon_n|+\eta_n|\ln\eta_n|)\Big)\Big\}
\times\exp\left\{Mx^2\gamma_n|\ln\gamma_n|\right\}\\
&\ \ \ \ \nonumber+M(1+x)\exp\left\{\frac{x^2}{2}-|\ln\gamma_n|x\right\}\\
&\le \exp\Big\{M\Big(x^3(\varepsilon_n+\eta_n)+x^2\gamma_n|\ln\gamma_n|
+(1+x)(\varepsilon_n|\ln\varepsilon_n|+\eta_n|\ln\eta_n|)
\Big)\Big\}, ~\label{d-1-1}
\end{align}
where the last line follows by
$$
(1+x)\exp\left\{\frac{x^2}{2}-|\ln\gamma_n|x\right\}\le M\exp\left\{x^3\varepsilon_n\right\}.
$$
By an argument similar to the proof of the above equation, we deduce that for all
$1\le x=o(\eta_n^{-1})$,
\beq\label{d-1-2}
\frac{\pp\left(\frac{Q_n-\mu_n}{\delta_n}>x\right)}{1-\Phi(x)}
\ge \exp\Big\{-M\Big(x^3(\varepsilon_n+\eta_n)+x^2\gamma_n|\ln\gamma_n|
+(1+x)(\varepsilon_n|\ln\varepsilon_n|+\eta_n|\ln\eta_n|)\Big)\Big\}.
\deq
Hence, combining (\ref{d-1-1}) with (\ref{d-1-2}), we have, for all $1\le x=o(\eta_n^{-1})$,
$$
\aligned
&\ \ \ \ \Bigg|\ln\frac{\pp\left(\frac{Q_n-\mu_n}{\delta_n}>x\right)}{1-\Phi(x)}\Bigg|\\
&\le M\Big(x^3(\varepsilon_n+\eta_n)+x^2\gamma_n|\ln\gamma_n|
+(1+x)(\varepsilon_n|\ln\varepsilon_n|+\eta_n|\ln\eta_n|)\Big)\\
&\le M\Big(x^3 \eta_{n}+x^2\eta_n^2|\ln\eta_n|
+(1+x)\eta_n|\ln\eta_n|\Big)\\
\endaligned
$$

Next, we consider the case where $x\in[0,1]$.
It follows that (\ref{dd}) holds also for $-z_{nj}$. Thus, from (\ref{dd}), (\ref{d-2}) and the inequality $|e^x-1|\le |x|e^{|x|}$, we have
\begin{align*}
&\ \ \ \ \sup_{|x|\le 2}\Big|\pp\left(T_n>x\right)-\left(1-\Phi(x)\right)\Big|\\
&\le\sup_{|x|\le 2}\Big(1-\Phi(x)\Big)\Big|e^{M\left(x^3(\varepsilon_n+\eta_n)
+(1+x)\left(\eta_n|\ln\eta_n|+\varepsilon_n|\ln\varepsilon_n|\right)\right)}-1\Big|\\
&\le \sup_{|x|\le 2}M\Big(1-\Phi(x)\Big)\Big(x^3(\varepsilon_n+\eta_n)
+(1+x)\left(\eta_n|\ln\eta_n|+\varepsilon_n|\ln\varepsilon_n|\right)\Big)\\
&\le M\Big(\varepsilon_n+\eta_n+\eta_n|\ln\eta_n|+\varepsilon_n|\ln\varepsilon_n|\Big)\\
&\le M\Big(\varepsilon_n|\ln\varepsilon_n|+\eta_n|\ln\eta_n|\Big).
\end{align*}
By using (\ref{d-1}), we have
\begin{align*}
&\ \ \ \ \Big|\Big(1-\Phi(x+\gamma_n|\ln\gamma_n|)\Big)-\Big(1-\Phi(x)\Big)\Big|\\
&=\Big(1-\Phi(x)\Big)\Bigg|\frac{\Big(1-\Phi(x+\gamma_n|\ln\gamma_n|)\Big)}{\Big(1-\Phi(x)\Big)}-1\Bigg|\\
&\le \frac{M}{1+x}\exp\{-x^2/2\}\Big|\exp\{M(1+x)\gamma_n|\ln\gamma_n|\}-1\Big|\\
&\le \frac{M}{1+x}\exp\{-x^2/2\}(1+x)\gamma_n|\ln\gamma_n|\exp\{(1+x)\gamma_n|\ln\gamma_n|\}\\
&\le M\gamma_n|\ln\gamma_n|.
\end{align*}
Hence, from the inequality $|e^x-1|\le |x|e^{|x|}$, (\ref{g-1}), (\ref{g-2}) and the above inequality, we deduce for all $x\in[0,1]$,
\begin{align*}
&\ \ \ \ \pp\left(\frac{Q_n-\mu_n}{\delta_n}>x\right)-\Big(1-\Phi(x)\Big)\\
&\ge\pp\Big(T_n>x+\gamma_n|\ln\gamma_n|\Big)-\Big(1-\Phi(x)\Big)-\pp\left(\frac{1}{m\delta_n}\sum_{1\le i<j\le n}\frac{k_ik_j}{2m}\bar{h}(c_i,c_j)>\frac{1}{2}\gamma_n|\ln\gamma_n|\right)\\
&\ \ \ -\pp\left(-\frac{1}{m\delta_n}\sum_{i=1}^n\frac{k_i^2}{2m}(p_{c_{i}}-p_{(2)})>\frac{1}{2}\gamma_n|\ln\gamma_n|\right)
\\
&\ge\pp\Big(T_n>x+\gamma_n|\ln\gamma_n|\Big)-\Big(1-\Phi(x+\gamma_n|\ln\gamma_n|)\Big)\\
&\ \ \ \ -\Big|\Big(1-\Phi(x+\gamma_n|\ln\gamma_n|)\Big)-\Big(1-\Phi(x)\Big)\Big|\\
&\ \ \ \ -\pp\left(\frac{1}{m\delta_n}\sum_{1\le i<j\le n}\frac{k_ik_j}{2m}\bar{h}(c_i,c_j)>\frac{1}{2}\gamma_n|\ln\gamma_n|\right)\\
&\ \ \ \ -\pp\left(-\frac{1}{m\delta_n}\sum_{i=1}^n\frac{k_i^2}{2m}(p_{c_{i}}-p_{(2)})>\frac{1}{2}\gamma_n|\ln\gamma_n|\right)
\\
&\ge -M\Big(\varepsilon_n|\ln\varepsilon_n|+\eta_n|\ln\eta_n|\Big)
-\Big|\Big(1-\Phi(x+\gamma_n|\ln\gamma_n|)\Big)-\Big(1-\Phi(x)\Big)\Big|\\
&\ \ \ \ -\pp\left(\frac{1}{m\delta_n}\sum_{1\le i<j\le n}\frac{k_ik_j}{2m}\bar{h}(c_i,c_j)>\frac{1}{2}\gamma_n|\ln\gamma_n|\right)\\
&\ \ \ \ -\pp\left(-\frac{1}{m\delta_n}\sum_{i=1}^n\frac{k_i^2}{2m}(p_{c_{i}}-p_{(2)})>\frac{1}{2}\gamma_n|\ln\gamma_n|\right)
\\
&\ge -M\Big(\varepsilon_n|\ln\varepsilon_n|+\eta_n|\ln\eta_n|+\gamma_n|\ln\gamma_n|\Big).
\end{align*}
Similarly, we derive for all $x\in[0,1]$,
$$
\pp\left(\frac{Q_n-\mu_n}{\delta_n}\ge x\right)-\Big(1-\Phi(x)\Big)\le M\Big(\varepsilon_n|\ln\varepsilon_n|+\eta_n|\ln\eta_n|+\gamma_n|\ln\gamma_n|\Big).
$$
From (\ref{d-2}), we have
\begin{align*}
\frac{\pp\left(\frac{Q_n-\mu_n}{\delta_n}>x\right)}{1-\Phi(x)}-1
&\le \frac{M}{1-\Phi(x)}\Big(\varepsilon_n|\ln\varepsilon_n|+\eta_n|\ln\eta_n|+\gamma_n|\ln\gamma_n|\Big)\\
&\le M(1+x)\Big(\varepsilon_n|\ln\varepsilon_n|+\eta_n|\ln\eta_n|+\gamma_n|\ln\gamma_n|\Big),
\end{align*}
which implies
\begin{align*}
\frac{\pp\left(\frac{Q_n-\mu_n}{\delta_n}>x\right)}{1-\Phi(x)}
&\le 1+M(1+x)\Big(\varepsilon_n|\ln\varepsilon_n|+\eta_n|\ln\eta_n|+\gamma_n|\ln\gamma_n|\Big)\\
&\le 1+Mx^3(\varepsilon_n+\eta_n)+Mx^2\gamma_n|\ln\gamma_n|\\
&\ \ \ \ +M(1+x)\Big(\varepsilon_n|\ln\varepsilon_n|+\eta_n|\ln\eta_n|+\gamma_n|\ln\gamma_n|\Big).
\end{align*}
Then for all $x\in[0,1]$, it holds that
\begin{align*}
&\ \ \ \ \ln\frac{\pp\left(\frac{Q_n-\mu_n}{\delta_n}>x\right)}{1-\Phi(x)}\\
&\le \ln\Big(1+Mx^3(\varepsilon_n+\eta_n)+Mx^2\gamma_n|\ln\gamma_n|
+M(1+x)(\varepsilon_n|\ln\varepsilon_n|+\eta_n|\ln\eta_n|+\gamma_n|\ln\gamma_n|)\Big)\\
&\le M\Big(x^3(\varepsilon_n+\eta_n)+x^2\gamma_n|\ln\gamma_n|
+(1+x)(\varepsilon_n|\ln\varepsilon_n|+\eta_n|\ln\eta_n|+\gamma_n|\ln\gamma_n|)\Big).
\end{align*}
Similarly, we derive for all $x\in[0,1]$,
\begin{align*}
&\ \ \ \ \ln\frac{\pp\left(\frac{Q_n-\mu_n}{\delta_n}>x\right)}{1-\Phi(x)}\\
&\ge -M\Big(x^3(\varepsilon_n+\eta_n)+x^2\gamma_n|\ln\gamma_n|
+(1+x)(\varepsilon_n|\ln\varepsilon_n|+\eta_n|\ln\eta_n|+\gamma_n|\ln\gamma_n|)\Big).
\end{align*}
Therefore, for all $x\in[0,1]$, we have
\begin{align*}
&\ \ \ \ \Bigg|\ln\frac{\pp\left(\frac{Q_n-\mu_n}{\delta_n}>x\right)}{1-\Phi(x)}\Bigg|\\
&\le M\Big(x^3(\varepsilon_n+\eta_n)+x^2\gamma_n|\ln\gamma_n|
+(1+x)(\varepsilon_n|\ln\varepsilon_n|+\eta_n|\ln\eta_n|+\gamma_n|\ln\gamma_n|)\Big)\\
&\le M\Big(x^3 \eta_{n}+x^2\eta_n^2|\ln\eta_n|+(1+x)\eta_n|\ln\eta_n|\Big),
\end{align*}
which implies the desired result for all $0\le x=o(\eta_n^{-1})$.
\end{proof}

\begin{proof}[\bf Proof of Corollary \ref{cor2-1}]
By using Theorem \ref{thm2-1}, we have
\begin{align*}
\ln\frac{\pp\left(\frac{Q_n-\mu_n}{\delta_n}>x\right)}{1-\Phi(x)}
\le M\Big(x^3 \eta_{n}+x^2\eta_n^2|\ln\eta_n|+(1+x)\eta_n|\ln\eta_n|\Big),
\end{align*}
which implies that
\begin{align*}
\frac{\pp\left(\frac{Q_n-\mu_n}{\delta_n}> x\right)}{1-\Phi(x)}
\le\exp\Big\{M\Big(x^3 \eta_{n}+x^2\eta_n^2|\ln\eta_n|+(1+x)\eta_n|\ln\eta_n|\Big)\Big\}.
\end{align*}
Then (\ref{2-1-1-1}) holds for all $0\le x=o(\eta_n^{-1})$.
\end{proof}

\begin{proof}[\bf Proof of Corollary \ref{cor2-2}]
Let $\kappa_n=\min\{\varepsilon_n^{-1/4},\eta_n^{-1/4},\gamma_n^{-1/4}\}=\eta_n^{-1/4}$, then we have
\begin{align}
&\ \ \ \ \nonumber\sup_{x\in\rr}\Bigg|\pp\left(\frac{Q_n-\mu_n}{\delta_n}\le x\right)-\Phi(x)\Bigg|\\
&\le \nonumber\sup_{|x|\le\kappa_n}\Bigg|\pp\left(\frac{Q_n-\mu_n}{\delta_n}\le x\right)-\Phi(x)\Bigg|
+\sup_{|x|>\kappa_n}\Bigg|\pp\left(\frac{Q_n-\mu_n}{\delta_n}\le x\right)-\Phi(x)\Bigg|\\
&\le \nonumber\sup_{|x|\le \kappa_n}\Bigg|\pp\left(\frac{Q_n-\mu_n}{\delta_n}\le x\right)-\Phi(x)\Bigg|
+\sup_{x<-\kappa_n}\pp\left(\frac{Q_n-\mu_n}{\delta_n}\le x\right)+\sup_{x<-\kappa_n}\Phi(x)\\
&\ \ \ \ +\sup_{x>\kappa_n}\pp\left(\frac{Q_n-\mu_n}{\delta_n}>x\right)
+\sup_{x>\kappa_n}\Big(1-\Phi(x)\Big).~\label{k-1}
\end{align}
By Theorem \ref{thm2-1}, (\ref{d-2}) and the inequality $|e^x-1|\le |x|e^{|x|}$, we deduce that
\begin{align}
&\ \ \ \ \nonumber\sup_{|x|\le \kappa_n}\Bigg|\pp\left(\frac{Q_n-\mu_n}{\delta_n}\le x\right)-\Phi(x)\Bigg|\\
&\le \nonumber\sup_{0<x\le
\kappa_n}\Bigg|\pp\left(\frac{Q_n-\mu_n}{\delta_n}>x\right)-\Big(1-\Phi(x)\Big)\Bigg|
+\sup_{-\kappa_n\le x<0}\Bigg|\pp\left(\frac{Q_n-\mu_n}{\delta_n}\le x\right)-\Phi(x)\Bigg|\\
&\le \nonumber\sup_{0<x\le\kappa_n}M\Big(1-\Phi(x)\Big)\Big(x^3(\varepsilon_n+\eta_n)
+x^2\gamma_n|\ln\gamma_n|+(1+x)(\varepsilon_n|\ln\varepsilon_n|+\eta_n|\ln\eta_n|\Big.\\
& \ \ \ \ \nonumber\Big.+\gamma_n|\ln\gamma_n|)\Big)+\sup_{-\kappa_n\le x<0}M\Phi(x)\Big(|x|^3(\varepsilon_n+\eta_n)+x^2\gamma_n|\ln\gamma_n|\Big.\\
& \ \ \ \ \nonumber\Big.
+(1+|x|)(\varepsilon_n|\ln\varepsilon_n|+\eta_n|\ln\eta_n|+\gamma_n|\ln\gamma_n|)\Big)\\
&\le \nonumber\sup_{0<x\le\kappa_n}\frac{M}{1+x}e^{-x^2/2}\Big(x^3(\varepsilon_n+\eta_n)
+x^2\gamma_n|\ln\gamma_n|+(1+x)(\varepsilon_n|\ln\varepsilon_n|+\eta_n|\ln\eta_n|\Big.\\
& \ \ \ \ \nonumber\Big.+\gamma_n|\ln\gamma_n|)\Big)+\sup_{-\kappa_n\le x<0}\frac{M}{1-x}e^{-x^2/2}\Big(|x|^3(\varepsilon_n+\eta_n)+x^2\gamma_n|\ln\gamma_n|\Big.\\
& \ \ \ \ \nonumber\Big.
+(1+|x|)(\varepsilon_n|\ln\varepsilon_n|+\eta_n|\ln\eta_n|+\gamma_n|\ln\gamma_n|)\Big)\\
&\le \nonumber\sup_{0<x\le\kappa_n}\left\{\frac{M}{x}\Big(x^3(\varepsilon_n+\eta_n)
+x^2\gamma_n|\ln\gamma_n|\Big)+M(\varepsilon_n|\ln\varepsilon_n|+\eta_n|\ln\eta_n|
+\gamma_n|\ln\gamma_n|)\Big)\right\}\\
&\ \ \ \ \nonumber +\sup_{-\kappa_n\le x<0}\left\{\frac{M}{|x|}\Big(|x|^3(\varepsilon_n+\eta_n)+x^2\gamma_n|\ln\gamma_n|\Big)
+M\Big(\varepsilon_n|\ln\varepsilon_n|+\eta_n|\ln\eta_n|+\gamma_n|\ln\gamma_n|\Big)\right\}\\
&\le \nonumber\sup_{0<x\le\kappa_n}M\Big(x^2(\varepsilon_n+\eta_n)+x\gamma_n|\ln\gamma_n|+(\varepsilon_n|\ln\varepsilon_n|+\eta_n|\ln\eta_n|+\gamma_n|\ln\gamma_n|)\Big)\\
&\ \ \ \ \nonumber   +\sup_{-\kappa_n\le x<0}M\Big(x^2(\varepsilon_n+\eta_n)+|x|\gamma_n|\ln\gamma_n|+(\varepsilon_n|\ln\varepsilon_n|+\eta_n|\ln\eta_n|+\gamma_n|\ln\gamma_n|)\Big)\\
&\le \nonumber M\Big(\varepsilon_n^{3/2}+\eta_n^{3/2}+\gamma_n^{3/4}|\ln\gamma_n|
+\varepsilon_n|\ln\varepsilon_n|+\eta_n|\ln\eta_n|+\gamma_n|\ln\gamma_n|\Big)\\
&\le \nonumber M\Big(\varepsilon_n|\ln\varepsilon_n|+\eta_n|\ln\eta_n|+\gamma_n|\ln\gamma_n|\Big)\\
&\le M\eta_n|\ln\eta_n|.~\label{k-2}
\end{align}
In addition, from the inequality (\ref{d-2}), it is easy to see that
\beq\label{k-5}
\aligned
\sup_{x>\kappa_n}\Big(1-\Phi(x)\Big)=\sup_{x<-\kappa_n}\Phi(x)=&\Phi(-\kappa_n)=1-\Phi(\kappa_n)\\
&\le \frac{1}{\sqrt{\pi}(1+\kappa_n)}e^{-\kappa_n^2/2}
\le M\eta_n|\ln\eta_n|.
\endaligned
\deq
Using (\ref{k-2}) and (\ref{k-5}), we have
\begin{align}
\sup_{x<-\kappa_n}\pp\left(\frac{Q_n-\mu_n}{\delta_n}\le x\right)
&\nonumber=\pp\left(\frac{Q_n-\mu_n}{\delta_n}\le -\kappa_n\right)\\
&\le M\eta_n|\ln\eta_n|+\Phi(-\kappa_n)\le M\eta_n|\ln\eta_n|.~\label{k-3}
\end{align}
Similarly, it holds that
\beq\label{k-4}
\sup_{x>\kappa_n}\pp\left(\frac{Q_n-\mu_n}{\delta_n}>x\right)
\le M\eta_n|\ln\eta_n|.
\deq
Therefore, combining the inequalities (\ref{k-1})-(\ref{k-4}) together, the desired result is obtained.
\end{proof}

\begin{proof}[\bf Proof of Theorem \ref{thm2-2}]
It follows from (\ref{h1}) that
\begin{align*}
\sum_{1\leq i\neq j\leq n}B_{ij}^2
&=\sum_{1\leq i\neq j\leq n}\left(A_{ij}^2+\frac{k_i^2k_j^2}{4m^2}-2A_{ij}\frac{k_ik_j}{2m}\right)\\
&=\sum_{1\leq i\neq j\leq n}A_{ij}+\sum_{1\leq i\neq j\leq n}\frac{k_i^2k_j^2}{4m^2}
-\sum_{1\leq i\neq j\leq n}2A_{ij}\frac{k_ik_j}{2m}\\
&=2m+\sum_{1\leq i\neq j\leq n}\frac{k_i^2k_j^2}{4m^2}
-\sum_{1\leq i\neq j\leq n}A_{ij}\frac{k_ik_j}{m}.
\end{align*}
Since $Var(p_{c_1}-p_{(2)})=p_{(3)}-p_{(2)}^2$, from (\ref{2-3-1}), (\ref{q1}) and the above equality, we deduce
\begin{align}
\sigma_n^2&=\nonumber\frac{p_{(2)}+p_{(2)}^2-2p_{(3)}}{2m^2}\sum_{1\leq i\neq j\leq n}B_{ij}^2+\frac{p_{(3)}-p_{(2)}^2}{m^2}\sum_{i=1}^nB_{ii}^2\\
&\ge\nonumber\frac{p_{(2)}+p_{(2)}^2-2p_{(3)}}{2m^2}\sum_{1\leq i\neq j\leq n}B_{ij}^2\\
&=\nonumber\frac{p_{(2)}+p_{(2)}^2-2p_{(3)}}{2m^2}\left(2m+\sum_{1\le i\neq j\le n}\frac{k_i^2k_j^2}{4m^2}-\sum_{1\le i\neq j\le n}A_{ij}\frac{k_ik_j}{m}\right)\\
&\ge\nonumber\frac{p_{(2)}+p_{(2)}^2-2p_{(3)}}{m}\left(1-\sum_{1\le i\neq j\le n}A_{ij}\frac{k_ik_j}{2m^2}\right) \\
&\ge\nonumber\frac{p_{(2)}+p_{(2)}^2-2p_{(3)}}{m}\left(1-\frac{1}{2m^2}\sqrt{\sum_{1\le i\neq j\le n}A_{ij}^2}\sqrt{\sum_{1\le i\neq j\le n}k_i^2k_j^2}\right)\\
&\ge\nonumber\frac{p_{(2)}+p_{(2)}^2-2p_{(3)}}{m}
\left(1-\sqrt{\frac{\left(\sum_{i=1}^nk_i^2\right)^2}{2m^3}}\right)\\
&\ge\nonumber\frac{p_{(2)}+p_{(2)}^2-2p_{(3)}}{m}
\left(1-\frac{\sqrt{2}\max_{1\le i\le n}k_i}{\sqrt{m}}\right)\\
&\ge\delta_n^2\left(1-\frac{\sqrt{2}\max_{1\le i\le n}k_i}{\sqrt{m}}\right).~\label{l-3}
\end{align}
Note that
\begin{align}
\Bigg|\ln\frac{\pp\left(\frac{Q_n-\mu_n}{\sigma_n}>x\right)}{1-\Phi(x)}\Bigg|
&=\nonumber\Bigg|\ln\frac{\pp\left(\frac{Q_n-\mu_n}{\delta_n}>x\frac{\sigma_n}{\delta_n}\right)}{1-\Phi(x)}\Bigg|\\
&=\nonumber\Bigg|\ln\left(\frac{\pp\left(\frac{Q_n-\mu_n}{\delta_n}>x\frac{\sigma_n}{\delta_n}\right)}
{1-\Phi\left(x\frac{\sigma_n}{\delta_n}\right)}
\cdot\frac{1-\Phi\left(x\frac{\sigma_n}{\delta_n}\right)}{1-\Phi(x)}\right)\Bigg|\\
&\le \Bigg|\ln\frac{\pp\left(\frac{Q_n-\mu_n}{\delta_n}>x\frac{\sigma_n}{\delta_n}\right)}
{1-\Phi\left(x\frac{\sigma_n}{\delta_n}\right)}\Bigg|
+\Bigg|\ln\frac{1-\Phi\left(x\frac{\sigma_n}{\delta_n}\right)}{1-\Phi(x)}\Bigg|.~\label{oo-3}
\end{align}
From Theorem \ref{thm2-1}, (\ref{l-3}) and (\ref{d-1}), we have
\begin{align}
&\ \ \ \ \nonumber\Bigg|\ln\frac{\pp\left(\frac{Q_n-\mu_n}{\delta_n}>x\frac{\sigma_n}{\delta_n}\right)}
{1-\Phi\left(x\frac{\sigma_n}{\delta_n}\right)}\Bigg|\\
&\le M\Big(x^3(\varepsilon_n+\eta_{n})+x^2\gamma_n|\ln\gamma_n|
+(1+x)(\varepsilon_n|\ln\varepsilon_n|+\eta_n|\ln\eta_n|+\gamma_n|\ln\gamma_n|)\Big)~\label{oo-4}
\end{align}
and
\begin{align}
\Bigg|\ln\frac{1-\Phi\left(x\frac{\sigma_n}{\delta_n}\right)}{1-\Phi(x)}\Bigg|
&\le \nonumber\Bigg|\ln\frac{1-\Phi\left(x\left(1-\frac{\sqrt{2}\max_{1\le i\le n}k_i}{\sqrt{m}}\right)\right)}{1-\Phi(x)}\Bigg|\\
&=\nonumber\Bigg|\ln\exp\left\{\theta\sqrt{2\pi}(1+x)x\frac{\sqrt{2}\max_{1\le i\le n}k_i}{\sqrt{m}}\right\}\Bigg|\\
&\le M(1+x)x\varepsilon_n\le M(1+x)x\eta_n^2. ~\label{oo-5}
\end{align}
Hence, from (\ref{oo-3}), (\ref{oo-4}) and (\ref{oo-5}), we deduce that
\begin{align*}
\Bigg|\ln\frac{\pp\left(\frac{Q_n-\mu_n}{\sigma_n}>x\right)}{1-\Phi(x)}\Bigg|\le M\Big(x^3 \eta_{n}+x^2\eta_n^2|\ln\eta_n|
+(1+x)\eta_n|\ln\eta_n|\Big),
\end{align*}
which implies that (\ref{ooo-1}) holds.
\end{proof}

\begin{proof}[\bf Proof of Corollary \ref{cor2-21}]
The proof of Corollary \ref{cor2-21} is similar to the proof of Corollary \ref{cor2-2}, so the proof can be omitted.
\end{proof}

\end{document}